\newtheorem{theorem}{Theorem}[section]
\newtheorem{proposition}[theorem]{Proposition}
\newtheorem{definition}[theorem]{Definition}
\newtheorem{remark}[theorem]{Remark}
\newtheorem{example}[theorem]{Example}
\newtheorem{corollary}[theorem]{Corollary}
\newtheorem{lemma}[theorem]{Lemma}
\def\R{\mathbb R} \def\Z{\mathbb Z} \def\C{\mathbb C} 
\def\N{\mathbb N}
\def\C{{\mathbb C}}
\def\Ad{\hbox{\rm Ad}}
\def\GL{\mathrm {GL}}
\def\SL{\mathrm {SL}}
\def\<{\,<\!}
\def\>{\!>\,}
\def\R{\mathbb{R}}
\def\C{\mathbb{C}}
\def\F{\mathbb{F}}
\def\N{\mathbb{N}}
\def\GL{{\rm GL}}
\def \id {{\rm id}}
\def \Ad {{\rm{Ad}}}
\def \Spec{{\rm Spec}}
\def \exp {{\rm exp}}
\def \Z {\mathbb{Z}}
\def \R {\mathbb{R}}
\begin{document} 
 
\title{On the Surjectivity of the Power Maps of a Class of Solvable Groups} 
 
\author{S.G. Dani and Arunava Mandal} 
 

 
 \maketitle

\begin{abstract}

 Let  $G$ be a group containing a nilpotent normal subgroup $N$ with central series
 $\{N_j\}$, such that each $N_j/N_{j+1}$ is a $\mathbb{F}$-vector space over a field $\mathbb{F}$ and the action of $G$ on $N_j/N_{j+1}$ induced by the conjugation action is $\mathbb{F}$-linear. For $k\in \N$ we 
describe a necessary and sufficient condition for all elements from any coset $xN$, $x\in G$, to admit  $k$-th roots in $G$, 
 in terms of the action of $x$ on the  quotients $N_j/N_{j+1}.$ This yields in particular a condition for surjectivity 
 of the power maps, generalising various results known in special cases. For $\mathbb{F}$-algebraic groups we also characterise 
the property in terms of centralizers of elements.   For a class of  Lie groups, it is shown that surjectivity of the $k$-th power map, $k\in \N$, implies the same for the restriction of the map to the solvable radical of the group. The results are applied
 in particular to the study of exponentiality of Lie groups.
 
\end{abstract}

\noindent {\it Keywords}: {Power maps of groups, roots of elements, exponentiality of Lie groups.}

\section{Introduction}

Let $G$ be a group. For  $k \in \N$ (a natural number)  we denote by $P_k$ the $k$th power map of $G$,  defined by $P_k(g)=g^k$ for 
all $g\in G$. Inspired by the question of surjectivity of exponential maps of Lie groups there has been interest in  understanding 
conditions for $P_k$ to be surjective. The question was studied by Pralay Chatterjee for various classes groups, beginning with connected 
solvable Lie groups, in \cite{C},   algebraic groups over algebraically closed fields (\cite{C1}), groups of rational points of algebraic groups 
defined over real and $p$-adic fields etc. (see \cite{C2} and references there for details).

 Recently, in \cite{D},  the first named author  extended the study of surjectivity of the exponential  of a solvable Lie 
groups  to describing  conditions for  certain subsets (specifically cosets of certain nilpotent normal Lie subgroups) to be contained in the image of the exponential map; the results were applied in particular to describe conditions under which the radical of an exponential Lie group is exponential, and to generalise a result of Moskowitz and Sacksteder \cite{MS} for complex Lie groups to a large class of Lie groups, on centers of exponential Lie groups. 
 In this paper we study the analogous question for power maps of a large class of solvable groups; see 
 below for the definition of the class and Theorem~\ref{T1} for the statement of the main result.    The groups considered include connected solvable Lie groups, and in this case we deduce from Theorem~\ref{T1} some of the results of \cite{C} concerning the question of surjectivity of the power maps. 
 Our results also yield the corresponding results 
 for the exponential map  proved in \cite{D}, via McCrudden's criterion \cite{Mc} that an element in a Lie group is exponential if an only if it admits roots of all orders. Theorem~\ref{T1} is also applied to deduce surjectivity of the power map of the radical $R$ of a connected Lie group $G$, in analogy with the result in \cite{D} mentioned above, when the corresponding power map of $G$ is surjective, and $G/R$ satisfies a condition, as in~\cite{D}.  

Let $\mathbb{F}$ be a field.
 By a $\mathbb{F}$-{\it nilpotent} group, we mean a nilpotent group $N$ such that if $N=N_0\supset N_1\supset\cdots\supset N_r= \{e\}$
 is the central series of $N$ ($e$ being the identity element of $G$), then each $N_j/N_{j+1}$ is a finite dimensional $\mathbb{F}$-vector space.

Let $N$ be a  $\mathbb{F}$-nilpotent group and  $\{N_j\}$ its central series.   Let $G$ be a group acting on $N$ as a group of automorphisms of $N$.
The $G$-action on  $N$ is said to be  $\mathbb{F}$-{\it linear} if the induced action of $G$ on $N_j/N_{j+1}$ is 
 $\mathbb{F}$-linear for all $j.$

We note that if $G$ is a connected solvable Lie group with nilradical  $N$ and if the latter is simply connected, then $N$ is a $\R$-nilpotent group and the conjugation action of $G$ on $N$ is $\R$-linear. Also, for any field $\F$, if $G$ is the group of $\F$-points of a Zariski-connected  solvable algebraic group and $N$ is the unipotent radical of $G$ then $N$ is $\F$-unipotent and the conjugation action of $G$ on $N$ is $\F$-linear. Starting with these examples one can also construct examples  of non-algebraic groups $G$ with  $\F$-nilpotent normal subgroups $N$ of $G$ such that the conjugation action of $G$ on $N$ is $\F$-linear; we note that the condition holds in particular for any subgroup of $G$ as above containing $N$, in place of $G$ itself, and these include non-algebraic groups. 

In our results $\F$ is allowed to be of positive characteristic. 
In the following, if $\F$ is a field of characteristic $p$, we say that a natural number $k$ is coprime to the characteristic of $\F$ if either $p=0$ or $(k,p)=1$, namely $k$ is not divisible by $p$. For any group $G$ and $k\in \N$ we shall  denote, throughout, by $P_k$ the power map of $G$ defined by $P_k(g)=g^k$ for all $g\in G$, and by 
$P_k(G)$ the image of $P_k$. 

The following is the main technical result of the paper.

\begin{theorem}\label{T1}
{\it Let $G$ be a group and $N$ be a normal subgroup of $G$. Suppose that $N$ is 
 $\mathbb{F}$-nilpotent with respect to a field $\F$, and  that the conjugation action of 
 $G$ on $N$ is $\mathbb{F}$-linear. Let $N=N_0\supset N_1\supset\cdots\supset N_r= \{e\}$
 be the central series of $N$.  Let $A=G/N,$ $x\in G$ and $a=xN\in A.$
 Let $k\in \N$ be coprime to the characteristic of $\F$. Let   $B=\{b\in A\mid b^k=a\}$
 and $B^*$ be the subset consisting of all $b$ in $B$ such that for any $j=1,\dots ,r$ any element of $N_{j-1}/N_{j}$ which is fixed under the 
 action of $a$ is  also fixed under the action of $b$. Then we have the following:
 
i)  for any $b\in B^*$  and $n\in N$, there exists $y\in G$ such that $yN=b$ and $y^k=xn$;
  
 ii) if  $A$ is abelian,  and $xn\in P_k(G)$ for all $n\in N$,  then $B^*$ is non-empty. 
 
 }
\end{theorem}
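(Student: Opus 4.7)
For part (i), my strategy is to build $y$ by successive correction through the central series. I would start with any lift $y_0\in bN$ and iteratively set $y_j=y_{j-1}m_j$, with $m_j\in N_{j-1}$, so that $y_j^k\equiv xn\pmod{N_j}$; since $N_r=\{e\}$, after $r$ steps we obtain $y=y_r$ with $y^k=xn$ exactly. The key identity, which uses that $N_{j-1}/N_j$ is central in $N/N_j$, is
\[
(y'm)^k\equiv (y')^k\cdot T_b(\bar m)\pmod{N_j},\quad m\in N_{j-1},
\]
where $\bar m$ is the class of $m$ in $N_{j-1}/N_j$ and $T_b=I+\sigma+\sigma^2+\cdots+\sigma^{k-1}$ with $\sigma$ the action of $b^{-1}$ on $N_{j-1}/N_j$ (well-defined because $\Ad(N)$ is trivial on this central quotient). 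Each iterative step thus reduces to solving $T_b(\bar m)=\bar c$ for arbitrary $\bar c\in N_{j-1}/N_j$, i.e., to the surjectivity of $T_b$ on each $N_{j-1}/N_j$.

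Surjectivity of $T_b$ is the heart of (i), and is where the hypothesis $b\in B^*$ enters. Since $N_{j-1}/N_j$ is finite-dimensional over $\F$, injectivity suffices; passing to an algebraic closure $\bar\F$ (noting that $x^k-1$ is separable since $k$ is coprime to $\mathrm{char}\,\F$), any element of $\ker T_b$ has a nonzero component in an eigenspace of $\sigma$ for some eigenvalue $\zeta$ that is a $k$-th root of unity different from $1$. For such an eigenvector $v$ we have $\sigma^k v=\zeta^k v=v$; but $\sigma^k=\Ad(y_0^{-k})$ and $y_0^k\in xN$, so on the central quotient $\sigma^k$ coincides with the $a^{-1}$-action, giving $av=v$. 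The condition $b\in B^*$ then forces $bv=v$, i.e.\ $\sigma v=v$, contradicting $\zeta\neq 1$. Hence $\ker T_b=0$ and (i) follows.

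For part (ii), I would induct on the nilpotency class $r$. The quotient $\tilde G=G/N_{r-1}$ has central series of length $r-1$ and inherits the hypothesis by projection, so the inductive hypothesis yields some $b\in B$ satisfying the $B^*$ condition at the first $r-1$ levels; call the set of such $b$ the set $\tilde B^*$. Part (i) applied to $\tilde G$ shows that for any such $b$, the set $\Sigma_b=\{y^k:yN=b\}$ meets every coset of $N_{r-1}$ inside $xN$, and within each such coset $\Sigma_b$ is an affine translate of $T_b(N_{r-1})\subseteq N_{r-1}$ (by the same formula as in (i) specialised to $m\in N_{r-1}$). What remains is to find $b\in\tilde B^*$ for which $T_b$ is additionally surjective on $N_{r-1}$, which by the surjectivity argument of (i) is exactly the level-$r$ part of the $B^*$ condition.

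The main obstacle is this final step. I would argue by contradiction: if $T_b(N_{r-1})\subsetneq N_{r-1}$ for every $b\in\tilde B^*$, then each $\Sigma_b$ meets each coset of $N_{r-1}$ in $xN$ in a proper affine subspace. Since $A$ is abelian, $B$ is a coset of $K=\{c\in A:c^k=e\}$, and modifying $b$ by $c\in K$ multiplies the action on $N_{r-1}$ by the action of $c$ while correlatedly shifting the coset representative; combined with the hypothesis $\bigcup_{b\in B}\Sigma_b=xN$, this structure should force some $T_b$ to be surjective on $N_{r-1}$. Extracting this contradiction carefully, while handling those cosets of $N_{r-1}$ in $xN$ that are only hit by $\Sigma_b$ for $b\in B\setminus\tilde B^*$, is the technical core of (ii).
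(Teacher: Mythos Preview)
Your argument for (i) is correct and is essentially the paper's approach: both proceed by induction through the central series, reducing each step to the surjectivity of $T_b$ on $N_{j-1}/N_j$, which follows from the $B^*$ hypothesis by an eigenvalue analysis (the paper packages the one-level case separately as Proposition~2.1).

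For (ii) your inductive framework is right, but the final step has a genuine gap. Suppose every $b\in\tilde B^*$ fails the level-$r$ condition, so each $T_b$ has proper image in $V=N_{r-1}$. You want to conclude that $\bigcup_{b\in\tilde B^*}\Sigma_b$ cannot fill a full coset $xn_0V$. The difficulty is that $\tilde B^*$ may well be infinite (nothing in the hypotheses bounds $A$ or its $k$-torsion), and an infinite union of proper affine subspaces can cover all of $V$; your remark that $B$ is a coset of $K=\{c\in A:c^k=e\}$ does not rescue this. The paper supplies the missing idea: a \emph{uniform} container. Its Proposition~2.2 shows that whenever $F(a)\neq F(b)$ on $V$, every $v$ with $xn_0v=y^k$ for some $y\in bN$ lies in a fixed proper subset $X(\sigma(A),V)\subsetneq V$, a finite union of proper $A$-invariant subspaces coming from the isotypical decomposition of the $A$-action on $V$. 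The point is that $X(\sigma(A),V)$ depends only on the $A$-module $V$, not on $b$, so it absorbs the contribution of every bad $b$ simultaneously.

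There is also the reduction to $b\in\tilde B^*$ that you flag but do not resolve, and your phrasing has it slightly backwards: by part~(i), every $b\in\tilde B^*$ already hits every coset of $V$ in $xN$, so there are no cosets hit \emph{only} by $b\in B\setminus\tilde B^*$. What you actually need is a coset on which $B\setminus\tilde B^*$ contributes \emph{nothing}. The paper obtains this by applying the inductive hypothesis in contrapositive form to the set $B\setminus\tilde B^*$: since this set is disjoint from $\tilde B^*$ (which is the $B^*$ for $G/V$), induction yields some $n_0\in N$ with $xn_0V$ not reached by any $y^k$ with $yN\in B\setminus\tilde B^*$. On that coset every $k$-th root has $yN\in\tilde B^*$, and the uniform container $X(\sigma(A),V)$ then gives the contradiction.
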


For groups of $\F$-rational points we deduce the following Corollary;  
in the case when $\F$ is an algebraically closed field of characteristic zero, the  result is contained in \cite{C1}.

\begin{corollary}\label{cor:alg}
Let $G$ be the group of $\F$ points of a solvable algebraic group $\bf G$ defined over $\mathbb{F}$ and let $N$ be the group of $\F$-points of the unipotent radical $R_u({\bf G})$ of $\bf G$.  Let $k$ be coprime to the characteristic of $\F$.  Let $x$ be a semisimple element in $G$. Then  $xn\in P_k(G)$ for all $n\in N$  if and only if there exists $y\in Z(Z_G(x))$ such that $y^k=x.$
  \end{corollary}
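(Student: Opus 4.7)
The plan is to apply Theorem~\ref{T1} with $A = G/N$, which is abelian because ${\bf G}/R_u({\bf G})$ is a torus. Both directions will pivot on the identification
\[
(\star)\qquad V_j^{\,a} \;=\; (N^x \cap N_{j-1})\,N_j/N_j \quad\text{in}\quad V_j := N_{j-1}/N_j,
\]
where $N^x := \{n \in N : xn = nx\}$; this is a standard consequence of the linear reductivity of the diagonalizable algebraic subgroup generated by $x$ acting on the short exact sequence $1 \to N_j \to N_{j-1} \to V_j \to 1$, and may also be proved by downward induction along the central series, using semisimple splittings of the $x$-action on the abelian subquotients.

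Backward direction: assume $y \in Z(Z_G(x))$ with $y^k = x$. Write $y = y_s y_u$ in Jordan decomposition; then $y_s^k y_u^k = x$, and since $x$ is semisimple with $y_u^k$ unipotent and $k$ coprime to the characteristic of $\F$, uniqueness of the Jordan decomposition forces $y_u = e$, so $y$ is semisimple. As $y$ centralizes $N^x \subseteq Z_G(x)$, conjugation by $y$ fixes $N^x$ pointwise, and in particular fixes $(N^x \cap N_{j-1})N_j/N_j$ pointwise; by $(\star)$ this is exactly $V_j^{\,a}$. Setting $b := yN$, this yields $V_j^{\,a} \subseteq V_j^{\,b}$ for each $j$, i.e.\ $b \in B^*$, and Theorem~\ref{T1}(i) then supplies, for every $n \in N$, an element $y' \in G$ with $(y')^k = xn$.

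Forward direction: by Theorem~\ref{T1}(ii) applied to the hypothesis $xn \in P_k(G)$ for all $n \in N$, there is some $b \in B^*$ with $b^k = a$; Theorem~\ref{T1}(i) (with $n = e$) then gives $y \in G$ with $yN = b$ and $y^k = x$. The Jordan decomposition argument again forces $y$ to be semisimple, so $y \in Z_G(x) = H(\F)$, where $H := Z_{{\bf G}}(x)$ is connected solvable with $R_u(H)(\F) = N^x$. Choose a maximal $\F$-torus ${\bf T}'$ of $H$ containing $y$; since $x$ is a semisimple central element of $H$, every maximal torus of $H$ contains $x$, so $x \in {\bf T}'$ and $y$ trivially commutes with $T' := {\bf T}'(\F)$. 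To conclude $y \in Z(Z_G(x))$ it remains to show $y$ commutes with $N^x$. Conjugation by the semisimple $y$ is a semisimple automorphism of the connected unipotent group $N^x$; by $(\star)$ and the condition $V_j^{\,a} \subseteq V_j^{\,b}$, this automorphism is trivial on each graded piece of the induced central series $\{N^x \cap N_{j-1}\}$ of $N^x$. Such an automorphism is unipotent, and being simultaneously semisimple, it is the identity. Hence $y$ centralizes $N^x$, as required.

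The principal technical hurdle is the identification $(\star)$ together with its companion statement ``a semisimple automorphism of a unipotent group that is trivial on the associated graded of a central series must be trivial''; both reflect the linear reductivity of diagonalizable algebraic group actions and go through uniformly in any characteristic. The hypothesis that $k$ is coprime to the characteristic of $\F$ is used in both directions only to extract, via the Jordan decomposition, a purely semisimple $k$-th root of $x$.
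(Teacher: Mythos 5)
Your argument is correct and follows essentially the same route as the paper's: both directions are reductions to Theorem~\ref{T1}, mediated by identifying the fixed points of $x$ on each $N_{j-1}/N_j$ with the image of $Z_N(x)\cap N_{j-1}$ (your $(\star)$, the paper's Remark~\ref{centr}). The differences are only mechanical --- the paper first conjugates $x$ into a fixed maximal torus $T$ and gets semisimplicity of the root $y$ by writing $y=sn$ with $s\in T$ and $n^k=e$, where you instead use the Jordan decomposition of $y$ and a maximal torus of $Z_{\bf G}(x)$ --- and both texts leave the same underlying facts about diagonalizable actions on unipotent groups to the reader.
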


Theorem~\ref{T1} yields in particular the following generalisations of certain results proved in \cite{C}, thereby putting them  in a broader 
perspective.  For any Lie subgroup $S$ a Lie group $G$ we denote by $L(S)$ the corresponding Lie subalgebra. For any $X\in L(G)$ and a Lie subgroup $T$ we denote by $Z_T(X)$, the centraliser of $X$ in $T$, namely $\{t\in T\mid \Ad (t)(X)=X\}$.

 \begin{corollary}\label{Cor1.2}
 Let $G$ be a connected solvable Lie group, $N$ be the nilradical of $G$ and $H$ be a Cartan subgroup of $G$. Let $h\in H$ and $k\in \N$. Then $hn \in P_k(G)$ for all $n\in N$ if and only if  there exists 
 $g\in H$ such that $g^k=h$ and $g\in Z_H(X)$ for every $X \in L(N)$ such that $h\in Z_H(X) $. 
In particular,  $P_k:G\to G$ is  surjective if and only if $P_k:Z_H(X)\to Z_H(X)$ is
 surjective for all $X\in L(N).$
\end{corollary}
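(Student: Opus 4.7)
The strategy is to apply Theorem~\ref{T1} to $G$ with $N$ the nilradical and $A = G/N$, which is a connected abelian group (so the second hypothesis of Theorem~\ref{T1} holds). To ensure that $N$ is $\R$-nilpotent, I would first reduce to the case where $G$ is simply connected by passing to the universal covering $\pi\colon \tilde G \to G$; the nilradical $\tilde N$ of $\tilde G$ is then simply connected, $\pi(\tilde N) = N$, Cartan subgroups correspond under $\pi$, and the identity $P_k(G) = \pi(P_k(\tilde G))$ lets one transfer the conclusion. Assuming henceforth $G$ (and so $N$) simply connected, the central series quotients $N_{j-1}/N_j$ are identified via $\exp$ with $L(N_{j-1})/L(N_j)$, and the conjugation action of $G$ on them factors through the $\R$-linear adjoint action of $A$.

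For $(\Leftarrow)$: given $g\in H$ with $g^k=h$ satisfying the centralizer condition, set $a=hN$, $b=gN$, so $b^k=a$. The hypothesis asserts that the $\Ad(g)$-fixed subspace of $L(N)$ contains the $\Ad(h)$-fixed subspace. I would deduce from this that on each quotient $L(N_{j-1})/L(N_j)$ every $\Ad(h)$-fixed vector is $\Ad(g)$-fixed, i.e.\ $b$ lies in the set $B^*$ of Theorem~\ref{T1}. The crucial technical input is that, since $h$ lies in the Cartan subgroup $H$, the semisimple part of $\Ad(h)|_{L(N)}$ preserves the central series filtration and acts semisimply on each successive quotient, so the $\Ad(h)$-fixed vectors on the quotients lift to $\Ad(h)$-fixed vectors of $L(N)$. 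Theorem~\ref{T1}(i) then yields $y\in G$ with $y^k=hn$ for each $n\in N$.

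For $(\Rightarrow)$: if $hn \in P_k(G)$ for all $n\in N$, Theorem~\ref{T1}(ii) furnishes $b\in A$ with $b^k=a$ that fixes all $a$-fixed vectors in every quotient. I would lift $b$ to $g\in H$ (using that $H\to A$ is surjective), adjust $g$ within $H\cap N$ so that $g^k = h$ (using that $H\cap N$ is a simply connected nilpotent group, hence divisible), and then apply the lifting argument above in reverse to verify the centralizer condition $g\in Z_H(X)$ whenever $h\in Z_H(X)$.

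The ``in particular'' statement follows from $G = HN$: $P_k\colon G\to G$ is surjective iff $hn\in P_k(G)$ for every $h\in H$ and $n\in N$, which by the main equivalence holds iff every $h\in H$ has $g\in H$ with $g^k=h$ and $g\in Z_H(X)$ whenever $h\in Z_H(X)$. To extract this from surjectivity of $P_k$ on each individual $Z_H(X)$, I would apply the latter to a generic $X_0$ in the fixed subspace $V := \{X\in L(N)\mid \Ad(h)X=X\}$: by upper semicontinuity of the dimension of $\{Y\in L(H)\mid [Y,X]=0\}$, the centralizer $Z_H(X_0)$ of a generic $X_0\in V$ coincides with the simultaneous centralizer $\bigcap_{X\in V}Z_H(X)$, so any $k$-th root of $h$ in $Z_H(X_0)$ meets the required condition. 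The principal obstacle is the first-paragraph lifting assertion, which rests on the semisimplicity of the semisimple part of $\Ad(h)$ for $h$ in a Cartan subgroup of a solvable Lie group, together with the reduction to simply connected $G$ that makes Theorem~\ref{T1} applicable in the first place.
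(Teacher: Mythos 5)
Your overall strategy --- reduce to a simply connected nilradical, apply Theorem~\ref{T1} to $A=G/N$, and translate the fixed-point condition on the quotients $N_{j-1}/N_j$ into the condition on $\Ad$-centralizers (this translation is Remark~\ref{L} in the paper) --- is indeed the paper's strategy. But your reduction step has a genuine gap. Passing to the universal cover $\pi\colon\tilde G\to G$ does give $P_k(G)=\pi(P_k(\tilde G))$, but that identity only says that each element of $P_k(G)$ has \emph{some} lift lying in $P_k(\tilde G)$; it does not let you transfer the hypothesis ``$hn\in P_k(G)$ for all $n\in N$'' to ``$\tilde h\tilde n\in P_k(\tilde G)$ for all $\tilde n\in\tilde N$'' for a chosen lift $\tilde h$, which is what you need in order to invoke Theorem~\ref{T1}(ii) upstairs. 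The kernel of $\pi$ is a discrete central subgroup, and discrete groups are not divisible: already in $\R\ltimes\R^2$ (the universal cover of the Euclidean motion group) the element $(2\pi,w)$ is not a square for $w\neq 0$ although its image downstairs is a square, so a lift of a coset can fail to lie in $P_k(\tilde G)$ even when every element of the coset downstairs lies in $P_k(G)$. The paper avoids this by a different reduction (Remark~\ref{sconn}): it quotients by the unique maximal compact subgroup $C$ of $N$, which is a connected central torus, hence divisible, so that $g\in P_k(G)$ \emph{if and only if} $gC\in P_k(G/C)$, and $N/C$ is simply connected. You should replace the covering argument by this quotient argument.

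A second, smaller problem is the justification of the last step. The claim that for a generic $X_0$ in $V=\{X\in L(N)\mid \Ad(h)X=X\}$ one has $Z_H(X_0)=\bigcap_{X\in V}Z_H(X)$ cannot be derived from upper semicontinuity of the dimension of the infinitesimal centralizer alone: for a general linear action the stabilizer of a generic vector (even one of minimal dimension) need not be the pointwise stabilizer of the subspace, as the action of $SO(3)$ on $\R^3$ shows. What makes the step work here is the weight space decomposition $L(N)=\oplus_{s}L(N)_s$ under $\Ad(H)$: the paper takes $X=\sum_s X_s$ with one nonzero $\Ad(h)$-fixed vector $X_s$ from each weight space on which $\Ad(h)$ has eigenvalue $1$; since each $L(N)_s$ is $\Ad(H)$-invariant, any $g\in Z_H(X)$ with $g^k=h$ must fix each $X_s$, which forces $\lambda(g,s)=1$ on those weight spaces and hence forces $g$ to fix every $\Ad(h)$-fixed vector of $L(N)$. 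Your generic $X_0$ would serve the same purpose, but only via this weight-space argument, not via semicontinuity. The remaining ingredients of your proposal (lifting $b$ to $g\in H$ with $g^k=h$ using $G=HN$ and divisibility in the connected nilpotent group $H$, and the equivalence recorded in Remark~\ref{L}) match the paper's proof.
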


Via McCrudden's criterion \cite{Mc} recalled above Theorem~\ref{T1} yields  
the following variation of  Theorem~2.2 of \cite{D}. 

\begin{corollary}\label{D-exp}
Let $G$ be a connected solvable Lie group and $N$ be a  connected nilpotent closed normal subgroup of $G$ such that $G/N$ is abelian. Let $A=G/N$ and $N=N_0\supset N_1\supset\cdots\supset N_r= \{e\}$
 be the central series of $N$. Let $x\in G$ and $a=xN/N$. Then $xn$ is exponential in $G$ for all $n\in N$ if and only if 
 there exists a one-parameter subgroup $B$ of $A$ containing $a$ such that for any $j=1,\dots ,r$ any point of $N_{j-1}/N_{j}$ which is fixed by the 
 action of $a$ is  also fixed by the action of $B$.

\end{corollary}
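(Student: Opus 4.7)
The plan is to use McCrudden's criterion \cite{Mc}---an element of a Lie group is exponential if and only if it admits $k$-th roots for every $k\in\N$---to translate both implications into statements about $P_k(G)$ to which Theorem~\ref{T1} applies. The easy direction goes as follows: given such a one-parameter subgroup $B$ and any $k\in\N$, since $B$ is divisible (being a one-parameter subgroup of an abelian Lie group), I can pick $b\in B$ with $b^k=a$. Every element of $B$ fixes the $B$-fixed subspace $V_j^B$ of $N_{j-1}/N_j$, which by hypothesis contains the $a$-fixed subspace $V_j^a$, so $b$ fixes $V_j^a$ for each $j$. Thus $b$ lies in the set $B^*$ associated with $k$ in Theorem~\ref{T1}, and Theorem~\ref{T1}(i) yields $xn\in P_k(G)$ for every $n\in N$. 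As $k$ was arbitrary, McCrudden's criterion gives exponentiality of every $xn$.

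For the converse, suppose every $xn$ is exponential; by McCrudden, $xn\in P_k(G)$ for every $k$ and $n$, and Theorem~\ref{T1}(ii) supplies, for each $k$, an element $b_k\in A$ with $b_k^k=a$ that fixes $V_j^a$ pointwise for each $j$. Let
\begin{equation*}
K=\{g\in A: g\cdot v=v\text{ for every }j\text{ and every }v\in V_j^a\},
\end{equation*}
a closed subgroup of $A$ containing every $b_k$ and hence satisfying $a=b_k^k\in K^k$ for all $k\in\N$. Once I know $a\in K^0$ (the identity component of $K$), surjectivity of the exponential map of the connected abelian Lie group $K^0$ produces $\bar Y\in L(K^0)$ with $\exp\bar Y=a$, and then $B=\{\exp(t\bar Y):t\in\R\}\subseteq K^0\subseteq K$ is a one-parameter subgroup through $a$ that fixes each $V_j^a$, giving $V_j^a\subseteq V_j^B$ as required.

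The main obstacle is therefore showing $a\in K^0$. The plan is to establish that $K/K^0$ is finitely generated abelian, after which the conclusion is automatic: $a\in K^k$ for every $k$ makes the class of $a$ in $K/K^0$ a divisible element of a finitely generated abelian group, hence trivial. For the finite generation, I would lift $K$ to the universal cover $\widetilde A\cong\R^{\dim A}$ of the connected abelian group $A$. The preimage $\widetilde K$ is a closed subgroup of a vector group, so it has the form $V\oplus\Lambda$ with $V$ a linear subspace and $\Lambda$ a lattice; thus $\widetilde K/\widetilde K^0\cong\Lambda$ is free abelian of finite rank. The surjection $\widetilde K/\widetilde K^0 \to K/K^0$ induced by the covering then shows $K/K^0$ is finitely generated, completing the reduction and hence the proof.
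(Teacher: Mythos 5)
Your proof is correct and follows the same route as the paper, which disposes of this corollary in one line by combining McCrudden's criterion \cite{Mc} with Corollary~\ref{B} (the Lie-group form of Theorem~\ref{T1}). The one substantive step the paper leaves implicit --- passing from the existence, for every $k\in\N$, of a $k$-th root $b_k$ of $a$ inside the closed subgroup $K$ of $A$ fixing all $a$-fixed vectors, to the existence of a one-parameter subgroup of $K$ through $a$ --- is exactly what you supply, and your argument (lift $K$ to the universal cover $\R^{\dim A}$ to see that $K/K^0$ is finitely generated, so the divisible class of $a$ in $K/K^0$ is trivial, then use surjectivity of the exponential map of the connected abelian group $K^0$) is sound.
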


In analogy with the criterion from \cite{D} for exponentiality of radicals we deduce the following criterion for surjectivity of the power maps of radicals. 

 \begin{corollary}\label{A}
 Let $G$ be a connected Lie group such that $P_k:G\rightarrow G$ is
 surjective. Let $R$ be the (solvable) radical of $G$ and  $S=G/R$. Suppose that $S$ has a unipotent 
 one-parameter subgroup $U$ such that $Z_S(U)$ does not contain any element whose order divides $k.$
 Then $P_k:R\rightarrow R$ is surjective.  \end{corollary}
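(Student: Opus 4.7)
Fix $r \in R$; we must produce $y \in R$ with $y^k = r$. By the hypothesis $P_k(G) = G$ there exists $g \in G$ with $g^k = r$. Writing $\pi \colon G \to S = G/R$ for the quotient and $\sigma := \pi(g)$, we have $\sigma^k = e$ in $S$. The strategy is to replace $g$, if necessary, by a $k$-th root of $r$ whose projection in $S$ lies in $Z_S(U)$; by the hypothesis on $Z_S(U)$ that projection must then be $e$, placing the root in $R$.

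The first preparatory step is to lift $U \subset S$ to an ad-unipotent one-parameter subgroup $\widetilde U \subset G$ with $\pi(\widetilde U) = U$. I would pick any $Y \in L(G)$ with $d\pi(Y)$ a generator of $L(U)$, perform the abstract Jordan decomposition $Y = Y_s + Y_n$ on $L(G)$ via $\mathrm{ad}$, and use uniqueness of the decomposition (applied to $L(S)$, where $d\pi(Y)$ is ad-nilpotent) to conclude $d\pi(Y_s) = 0$, so $Y_n$ is an ad-nilpotent lift of $d\pi(Y)$; set $\widetilde U := \exp(\mathbb{R}\, Y_n)$. Then $H := \widetilde U \cdot R$ is a closed connected solvable Lie subgroup of $G$ containing $r$; since $Y_n$ is ad-nilpotent, $\widetilde U$ sits inside the nilradical $N$ of $H$, placing us in the set-up of Theorem~\ref{T1} and Corollary~\ref{Cor1.2}.

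The crux is to produce a $k$-th root of $r$ lying in $H$. For this I would apply Theorem~\ref{T1}(i) to $H$ with $x = r$ and normal subgroup $N$: finding some $b \in B^\ast \subseteq H/N$ satisfying $b^k = rN$ and the fixed-point condition on each central-series quotient $N_{j-1}/N_j$ yields, via part (i), an element $y \in H$ with $y^k = r$ and $yN = b$. The given $g \in G$, combined with the unique $k$-divisibility of $\widetilde U$ (which is isomorphic to $\mathbb{R}$), is used to manufacture $b$. Once $y \in H$ is secured, writing $y = \widetilde u \rho$ with $\widetilde u \in \widetilde U$, $\rho \in R$, yields $\pi(y) \in U$ and $\pi(y)^k = e$; since $U \cong \mathbb{R}$ is torsion-free, $\pi(y) = e$, so $y \in R$.

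I expect the main obstacle to be the construction of $b \in B^\ast$, i.e.\ verifying the input for Theorem~\ref{T1}(ii) in $H$. This is where the condition on $Z_S(U)$ enters: a candidate $b$ whose image in $S$ is a non-identity element of order dividing $k$ commuting with $U$ is ruled out by the hypothesis, so $b$ is forced to project to the identity of $U$, landing $y$ in $R$. Translating the $S$-level statement about $Z_S(U)$ into the fixed-point language of Theorem~\ref{T1} on the layers $N_{j-1}/N_j$ is the technical heart of the proof, and the argument follows the pattern of the exponentiality argument of \cite{D} referenced in the statement.
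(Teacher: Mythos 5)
Your outer scaffolding matches the paper's: pass to the closed subgroup $H=\pi^{-1}(U)\supset R$, produce a $k$-th root of $r$ inside $H$, and then use that $H/R\cong U\cong\R$ is torsion-free to force the root into $R$. But the step you yourself flag as ``the main obstacle'' --- manufacturing an element $b\in B^*$ so that Theorem~\ref{T1}(i) applies to the coset $rN$ in $H$ --- is exactly the content of the proof, and your sketch of it does not work. The element $g\in G$ with $g^k=r$ is of no direct use: $\pi(g)$ has order dividing $k$ in $S$ but there is no reason for it to lie in $Z_S(U)$, so $g$ need not lie in $H$ at all, and $gN$ is not even a candidate for $b$. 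Moreover, to extract $b\in B^*$ from Theorem~\ref{T1}(ii) you would need to know in advance that $rn\in P_k(H)$ for \emph{all} $n\in N$, which is essentially what you are trying to prove --- a circularity. You have also misplaced where the hypothesis on $Z_S(U)$ enters: any $b\in H/N$ with $b^k=rN$ automatically projects to the identity in $H/R\cong\R$ by torsion-freeness, with no appeal to the hypothesis; the hypothesis is not needed at that final stage.

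The paper's resolution, which is absent from your proposal, is a translation trick. Writing $U=\{u_tR\}$ for a one-parameter subgroup $\{u_t\}$ of $G$, one sets $y=u_1x$. Every element of the coset $yN$ lies in $H\setminus R$, and for such elements surjectivity of $P_k$ on $G$ \emph{does} produce roots lying in $H$: if $g^k=h$ with $h\in H\setminus R$, then $gR$ commutes with the nontrivial unipotent element $hR$ of $U$, hence with all of $U$, so $gR\in Z_S(U)$, and Lemma~\ref{AL1} (using the hypothesis on $Z_S(U)$) forces $gR\in U$, i.e.\ $g\in H$ (Proposition~\ref{P5}). Thus $yn\in P_k(H)$ for all $n\in N$, and Theorem~\ref{T1}(ii) (via Corollary~\ref{C1}) yields $b'$ with $b'^k=yN$ satisfying the fixed-point condition. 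One then shifts back, $b=(u_{1/k}N)^{-1}b'$, so $b^k=xN$, and uses the fact that $U$ acts unipotently on $L(N)$ --- so that $u_tN$ acts trivially on every irreducible subquotient --- to see that $a=xN$ and $a'=yN$, and likewise $b$ and $b'$, have identical actions on the subquotients; hence $b\in B^*$ for $xN$ and Theorem~\ref{T1}(i) gives $x\in P_k(H)$. This shift-and-compare argument is the heart of the proof and is the concrete gap in your write-up. (Your lifting of $U$ to an ad-unipotent one-parameter subgroup of $G$ via Jordan decomposition is also more delicate than you indicate, since the abstract Jordan decomposition is not available in an arbitrary Lie algebra, but this is a side issue: the paper avoids it entirely by working with $H=\pi^{-1}(U)$ and the nilradical $N$ of $G$, for which $H/N$ is abelian because $R/N$ is central in $G/N$.)
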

 
The paper is organised as follows. In \S\,2 we prove a preliminary result which is applied in \S\,3 to complete  the proof of Theorem~\ref{T1}. In \S\,4 we discuss the case of power maps of algebraic groups and prove 
Corollary~\ref{cor:alg}. Corollary~\ref{Cor1.2}  is proved  in \S\,5, where 
we discuss some more 
applications of Theorem~\ref{T1} (see Corollaries~\ref{cor5.5} and \ref{cor5.6}) for the power maps of solvable Lie groups. Corollary~\ref{A} on power maps of radicals is proved in \S\,6. 

\section{Results for linear actions on vector spaces}

In this section we prove various results for groups which are semidirect products of cyclic groups with vector spaces, to be used  later to deduce the main theorem.

Let $\F$ be a field and $V$ be a finite-dimensional $\mathbb{F}$-vector space. We denote by $\GL(V)$ the 
group of nonsingular $\F$-linear transformations of $V$. For $\tau \in \GL(V)$ we denote by $F(\tau)$ the set of points fixed by $\tau$, viz. $F(\tau) =\{v\in V\mid \tau v=v\}$. 

We note that if $\tau \in \GL(V)$ and $k\in \N$ are  such that $\tau$ is not unipotent and $\tau^k$ is unipotent
then $F(\tau)$ is a proper subspace of $F(\tau^k)$. Also, for any $\tau \in \GL(V)$ and $k\in \N$ there exists a unique minimal $\tau$-invariant subspace $V'$ such that the (factor) action of  $\tau^k$ on $V/V'$ is trivial while that of $\tau$ has no nonzero fixed point. These results can be proved by a straightforward application of 
the decomposition into generalised eigenspaces, or equivalently the Jordan canonical form of matrices; we omit the details.

\begin{proposition}\label{P1}
 {\it  Let $\F$ and $V$ be as above.  
 Let $G$ be a group with (a copy of) $V$ as a normal subgroup such that $G/V$ is cyclic, and  the conjugation action of $G$ on $V$ is $\mathbb{F}$-linear. Let $\sigma:G\to \GL(V)$ denote the induced action. 
 Let   $k\in \N$ be coprime to the characteristic of $\F$. Let  $g\in G$ and $x=g^k$. Then the following statements hold:   

i) if  $F(\sigma (x))=(0)$ then for every $v\in V$, there exists a unique $w\in V$ such that $xv=wxw^{-1}$. 
 
ii)  if $F(\sigma (x)) =F(\sigma (g))$  then  $xv \in P_k(G)$ for all $v\in V$.
 } 
\end{proposition}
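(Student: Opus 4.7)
The plan is to translate both assertions into questions about $\F$-linear operators on $V$, using the $\F$-linearity of the conjugation action. Writing $V$ additively and setting $T := \sigma(x)$, $s := \sigma(g)$, the starting point is the identity
\[
wxw^{-1} = x \cdot (T^{-1} - I)(w), \qquad w \in V,
\]
obtained from $wxw^{-1} = x(x^{-1}wx)w^{-1}$ together with $x^{-1}wx = T^{-1}(w)$. A straightforward induction, repeatedly moving the $V$-factor past $g$ via conjugation, gives the companion identity
\[
(gw)^k \;=\; g^k \cdot \bigl(I + s^{-1} + s^{-2} + \cdots + s^{-(k-1)}\bigr)(w).
\]
Both depend only on $V$ being normal with $\F$-linear conjugation action and on $g^k = x$ (so $s^k = T$).

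For part (i), I would rewrite $xv = wxw^{-1}$ as the linear equation $(T^{-1} - I)(w) = v$ in $V$. The hypothesis $F(T) = (0)$ says $T - I$ has trivial kernel, hence is invertible on the finite-dimensional space $V$, and the same then holds for $T^{-1} - I = -T^{-1}(T - I)$. This yields a unique $w$ for each $v$, proving (i).

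For part (ii), I would look for $y$ of the form $gw$ with $w \in V$. Then $y^k = x \cdot \phi(w)$ with $\phi := I + s^{-1} + \cdots + s^{-(k-1)}$, so the assertion $xv \in P_k(G)$ for all $v$ reduces to surjectivity of $\phi$ on $V$. Writing $\phi = s^{-(k-1)} \psi$ for $\psi := I + s + s^2 + \cdots + s^{k-1}$, and using that $s^{-(k-1)}$ is invertible, it suffices to show $\psi$ is surjective; by finite-dimensionality this is equivalent to $\ker \psi = (0)$.

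The main obstacle, and the only place where the two hypotheses on $g$ and $k$ both enter, is the proof that $\ker \psi = (0)$. My plan is to exploit the polynomial identity $(s - I)\psi = s^k - I = T - I$. Any $w \in \ker \psi$ then lies in $\ker(T - I) = F(T)$, which by hypothesis equals $F(s) = \ker(s - I)$. On $F(s)$ the operator $\psi$ reduces to multiplication by the scalar $k$; since $k$ is coprime to the characteristic of $\F$, this scalar is invertible in $\F$, forcing $w = 0$. Combined with the two displayed identities, this completes the argument.
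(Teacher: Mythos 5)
Your proof of part (i) is the same as the paper's: both reduce $xv=wxw^{-1}$ to the invertibility of $\sigma(x)^{-1}-I$, which follows from $F(\sigma(x))=(0)$. For part (ii), however, you take a genuinely different and more direct route. The paper argues by induction on the dimension of the largest subspace $W$ on which $\sigma(x)$ acts unipotently: the base case $W=(0)$ is handled by part (i), and the inductive step passes to $G/U$ with $U=F(\sigma(x))=F(\sigma(g))$, lifts a root modulo $U$, and absorbs the error term $u'\in U$ by extracting a $k$-th root $u$ of $u'$ in the vector group $U$ (using coprimality of $k$ with the characteristic) and noting that $u$ commutes with the lifted root. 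You instead work with the single operator identity $(s-I)\psi=s^k-I$ for $\psi=I+s+\cdots+s^{k-1}$, show $\ker\psi=(0)$ from the hypothesis $F(\sigma(x))=F(\sigma(g))$ together with the fact that $\psi$ acts as the scalar $k$ on $F(s)$, and conclude that $w\mapsto g^k\phi(w)=(gw)^k$ sweeps out the whole coset $xV$. Your argument is shorter, avoids the induction and any appeal to part (i), makes explicit that the $k$-th root can always be taken in the coset $gV$ (which is exactly what is needed later in the proof of Theorem~\ref{T1}(i)), and sidesteps the centrality claim for $u$ that the paper's inductive step relies on; it uses the coprimality hypothesis only once, to invert the scalar $k$ on $F(s)$. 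The computation $(gw)^k=g^k\theta w$ that you use is the same one the paper employs in Proposition~\ref{P2}, so nothing new is being smuggled in. Both proofs are correct; yours is arguably the cleaner one for this proposition, while the paper's inductive template is reused for the nilpotent setting of Theorem~\ref{T1}.
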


\proof i) Let $I$ denote the identity transformation. The hypothesis implies that  $(\sigma(x)^{-1}-I) \in \GL(V)$.
  Hence for any $v\in V$ there exists $w\in V$ such that $v=(\sigma(x)^{-1}-I)(w)= x^{-1}wxw^{-1}$, in 
  the group structure of $G$; hence we have $xv=wxw^{-1}$.
The uniqueness of $w$ follows from the fact that if $xv=w_1xw_1^{-1}=w_2xw_2^{-1}$, then $w_2^{-1}w_1
\in F(\sigma (x))$ which is given to be trivial.

  ii)  Let $W$ be the largest $\sigma(x)$ invariant subspace of $V$ such that the restriction of $\sigma (x)$ to $W$ is unipotent. Then $W$ is also 
 $\sigma (g)$-invariant, and the hypothesis together with the remark preceding the proposition (applied to $W$ in place of $V$) implies that the restriction of $\sigma (g)$ to $W$ is unipotent.  We now proceed by induction on the dimension of $W$. We note that if $W$  is $0$ dimensional then $F(\sigma (x))=0$ and in this case the desired assertion follows from part~(i). Now consider the general case. Let $U=F(\sigma (x))$ 
 which by the hypothesis is also $F(\sigma (g))$.  Then $U$ is a  normal subgroup of $G$ and $G/U$ contains  $V/U$ as a normal subgroup with cyclic quotient and the action of its generator  on $V/U$ is given by the quotient of the $\sigma (g)$-action on $V/U$. As the restriction of $\sigma (g)$ to $W$ is unipotent, $U$ is of positive dimension and $W/U$ has dimension less than $W$. We note also that $W/U$  is the largest $\sigma (g)$-invariant subspace of $V/U$ on which the $\sigma (x)$-action is unipotent. Hence by the induction hypothesis for any $v\in V$ there exists $\eta \in G$  such that $xvU=\eta^kU$. 
 Thus there exists $u'\in U$ such that $xv=\eta ^ku'$. Since $k$ is coprime to the characteristic of $\F$ there exists 
 $u\in U$ such that $u'=u^k$ (we use the multiplicative notation since $U$ is now being viewed as a subgroup of $G$). Also, as $\sigma (g)$ fixes $u$, the latter is contained in the center of $G$. Hence
 $xv=\eta^ku'=\eta^ku^k=(\eta u)^k$. Thus $xv\in P_k(G)$.   \qed

Now let $V$ be a finite-dimensional $\F$-vector space and let $A$ be an abelian subgroup of $\GL(V)$. Let 
$\frak W$ denote the collection of all  minimal $A$-invariant subspaces $W$ of $V$ such that the factor action of any element of $A$ on $V/W$ is semisimple (diagonalisable over the algebraic closure of $\F$) and can be decomposed into irreducible  components which are isomorphic to each other as $A$-modules; the subspaces from $\frak W$ 
may be arrived at by considering the largest semisimple quotient of the $A$-action on $V$ and decomposing it into isotypical components (putting together irreducible submodules isomorphic to each other). We note that $\frak W$ is a finite collection of proper subspaces of $V$. We shall denote by $X(A,V)$ the subset $\cup_{W\in \frak W} W$. We note that $X(A,V)$ is a proper subset of $V$. 

\begin{proposition}\label{P2}
Let $F$ be a finite-dimensional $\F$ vector space. Let $G$ be a group with $V$ as a normal subgroup such that $G/V$ is abelian. Let $\sigma :G\to \GL(V)$ be the induced action. Let $k\in \N$, $g\in G$ and $x=g^k$. 
 Let  $A=G/V$, $a=xV$, $b=gV\in  A$.  Suppose that $F(a)\neq F(b)$.  Let $v\in V$ be such that  there exists  $y\in G$ satisfying  $yV=b$ and $y^k=xv$. Then $v\in X(\sigma (A),V)$. 

\end{proposition}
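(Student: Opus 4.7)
The plan is to exploit the semidirect product structure to express $y^k$ as $x$ times a linear expression in a representative of $y$ modulo $V$, and then to locate an $A$-isotypical component of $V^{ss}$ on which this expression vanishes. Since $yV = b = gV$, write $y = gw$ for some $w\in V$; using $wg = g\cdot\alpha^{-1}(w)$ in $G$, where $\alpha = \sigma(g)=\sigma(b)$, a short induction gives $(gw)^k = g^k\,\theta(w)$ with
\[
\theta \;=\; I + \alpha^{-1} + \alpha^{-2} + \cdots + \alpha^{-(k-1)} \;\in\; \mathrm{End}_{\F}(V).
\]
Comparing with $y^k = xv$ and $g^k = x$ gives $v = \theta(w)$; in particular $v$ lies in the image of $\theta$.

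Next I would translate the goal $v\in X(\sigma(A),V)$ into a condition on a single isotypical component. Decomposing $V^{ss} = \bigoplus_i V^{ss}_i$ into $A$-isotypical pieces and letting $\pi_i\colon V\to V^{ss}_i$ be the canonical projection, the description in the excerpt identifies $\mathfrak{W}$ with $\{\ker\pi_i\}_i$, so $X(\sigma(A),V)=\bigcup_i\ker\pi_i$ and it suffices to find an $i$ with $\pi_i(v)=0$. A standard Schur-type argument gives $V^{ss}_i\cong U_i^{n_i}$ with $U_i$ irreducible, a field $K_i:=\mathrm{End}_A(U_i)$ on which $A$ acts by scalars, a scalar $\beta_i\in K_i$ giving the action of $b$, and consequently $\theta$ acts on $V^{ss}_i$ by the scalar $\theta_i:=\sum_{j=0}^{k-1}\beta_i^{-j}\in K_i$. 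Thus $\pi_i(v)=\theta_i\,\pi_i(w)$, and it suffices to exhibit an $i$ with $\theta_i=0$; evaluating the geometric sum shows $\theta_i=0$ if and only if either $\beta_i\neq 1$ and $\beta_i^k=1$, or $\beta_i=1$ and $\mathrm{char}(\F)$ divides $k$.

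The crux is the claim that the hypothesis $F(b)\neq F(a)$ forces such an $i$ to exist. I would prove this by contrapositive, showing that if $\theta_i\neq 0$ for every $i$ then $F(b)=F(a)$. For this I would use the primary decomposition $V=\bigoplus_{\mathfrak n}V_{\mathfrak n}$ of $V$ regarded as an $\F[b]$-module; each summand is $A$-invariant because $A$ is abelian, $b|_{V_{\mathfrak n}}=\beta_{\mathfrak n}\cdot I+N_{\mathfrak n}$ with $N_{\mathfrak n}$ nilpotent, and the primary decomposition descends to $V^{ss}$, which forces $\{\beta_{\mathfrak n}:V_{\mathfrak n}\neq 0\}=\{\beta_i\}$. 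Under the assumption that every $\theta_i\neq 0$, each nonzero $V_{\mathfrak n}$ falls into one of two subcases: either $\beta_{\mathfrak n}\neq 1$ with $\beta_{\mathfrak n}^k\neq 1$, so both $b-I$ and $b^k-I$ are invertible on $V_{\mathfrak n}$; or $\beta_{\mathfrak n}=1$ with $k\neq 0$ in $\F$, in which case
\[
b^k - I \;=\; (I+N_{\mathfrak n})^k - I \;=\; N_{\mathfrak n}\bigl(kI + \tbinom{k}{2}N_{\mathfrak n} + \cdots\bigr)
\]
with the parenthesised factor invertible, giving $\ker(b^k-I)|_{V_{\mathfrak n}}=\ker N_{\mathfrak n}=\ker(b-I)|_{V_{\mathfrak n}}$. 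Summing over $\mathfrak n$ yields $F(a)=F(b)$, the desired contradiction.

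The main obstacle will be this last step in positive characteristic: binomial coefficients can vanish unexpectedly and the naive expansion of $(I+N_{\mathfrak n})^k$ must be replaced by the Frobenius identity $(I+N)^{p^s}=I+N^{p^s}$. The saving feature is that the hypothesis $\theta_i\neq 0$ in the $\beta_i=1$ case already forces $\mathrm{char}(\F)\nmid k$, which is exactly what makes the factor $kI+\cdots$ above invertible and the two kernels agree. Once an $i$ with $\theta_i=0$ is secured, $\pi_i(v)=\theta_i\,\pi_i(w)=0$ shows $v\in W_i\subset X(\sigma(A),V)$, completing the proof.
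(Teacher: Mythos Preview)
Your argument is essentially correct and reaches the conclusion by a genuinely different route from the paper. The paper does not pass to the semisimple quotient or argue by contrapositive; instead it directly constructs a single proper $A$-invariant subspace $U$ of $V$ (the minimal $\sigma(g)$-invariant subspace such that $\sigma(x)$ is trivial on $V/U$ while $\sigma(g)$ has no nonzero fixed point on $V/U$), observes that $F(a)\neq F(b)$ forces $U\subsetneq V$, and shows $v\in U$ by splitting $V=U\oplus V'$ and using the identity $(\sigma(g)^{-1}-I)\theta=\sigma(x)^{-1}-I$, which vanishes on $V'$ while $\sigma(g)^{-1}-I$ is invertible there, giving $\theta|_{V'}=0$. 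Your approach is more structural: it computes $\theta$ isotypically as a Schur-field scalar and isolates a component where that scalar vanishes. The paper's proof is shorter and avoids the contrapositive detour; yours makes the connection to the isotypical description of $\mathfrak W$ more explicit.

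One step in your write-up is not correct as stated over a general field. In the primary decomposition $V=\bigoplus_{\mathfrak n}V_{\mathfrak n}$ over $\F[b]$, the assertion $b|_{V_{\mathfrak n}}=\beta_{\mathfrak n}\cdot I+N_{\mathfrak n}$ with $N_{\mathfrak n}$ nilpotent holds only when the associated irreducible polynomial is linear; for an irreducible factor $p(t)$ of degree greater than one there is no scalar $\beta_{\mathfrak n}\in\F$ with this property, and the subsequent case split does not apply as written. The repair is straightforward: either base-change to $\overline{\F}$ for this step (the equalities $F(a)=\ker(\sigma(g)^k-I)$ and $F(b)=\ker(\sigma(g)-I)$ are preserved and reflected under $\otimes_{\F}\overline{\F}$), or argue with polynomials: on the $p$-primary component with $p(t)\neq t-1$ the operator $b-I$ is already invertible, and $b^k-I$ is invertible there iff $p\nmid t^k-1$, which is equivalent to $\beta_i^k\neq1$ in the Schur field for any $A$-simple whose $b$-action has minimal polynomial $p$. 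Since Galois conjugation preserves the relation $\lambda^k=1$, your hypothesis ``$\theta_i\neq0$ for all $i$'' then forces every eigenvalue of $b$ in $\overline{\F}$ either to equal $1$ (with $\mathrm{char}\,\F\nmid k$) or to satisfy $\lambda^k\neq1$, and your kernel comparison goes through.
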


\proof Using the Jordan canonical form we see that there exists a  unique minimal $a$-invariant subspace, say $U$, of $V$ such that the action of $a$ on $V/U$ is trivial while the $b$-action on $V/U$ has no nonzero fixed point.  The condition in the hypothesis  implies that $U$ is a proper subspace of $V$. 
As $A$ is abelian it follows that $U$ is $A$-invariant. Moreover $U$ is contained in $X(\sigma (A),V)$ as we can find a 
subspace $W\in \frak W$ containing it. We show that any $v$ satisfying the condition in the hypothesis is contained in $U$. Let such a $v$ be given and $y\in G$ be such that $yV=b$ and $y^k=xv$. Then there exists $w\in V$ such that $y=gw$
and we have  $xv=y^k= (gw)^k= (gw) \cdots (gw) =
 g^k(g^{-(k-1)}wg^{(k-1)}) \cdots (g^{-1}wg)w=g^k\theta w=x\theta w$,
 where $\theta = \sigma (g)^{-(k-1)}+ \cdots +\sigma (g)^{-1} +I$, and hence $v=\theta w$. 
We  note that $V$ can be  decomposed  as $U\oplus V'$, where  $V'$ is a $\sigma (g)$-invariant subspace such that any eigenvalue of the restriction of $\sigma (g)$  to $V'$ is contained in  $\{\lambda \in \C \mid \lambda^k=1,\lambda \neq 1\}$. 
Also, $(\sigma (g)^{-1}-I)\theta  =\sigma(g)^{-k}-I=\sigma (x)^{-1}-I=0$, the zero transformation. Since $\sigma (g)$ has no nonzero fixed point on $V'$,  the restriction of $(\sigma (g)^{-1}-I)$ to $V'$ is invertible and   hence we have $\theta (v')=0$ for all $v'\in V'$.  Let $u\in U$ and $v'\in V'$ be 
such that $w=u+v'$. Then we have $v=\theta w=\theta u + \theta v' =\theta u\in U$, as desired. 
\qed

\section {Proof of Theorem \ref{T1} } 
In this section we shall deduce Theorem~\ref{T1} from the results of \S\,2. In the sequel, given a group $H$,
for $h\in H$ we denote by $\langle h\rangle$ the cyclic subgroup generated by $h$. 

\noindent {\it Proof of (i)} In proving this part without loss of generality we may assume that  $G$ is the product of 
$\langle g\rangle$ with $N$, where $g \in G$ is such that $gN=b$, with $b$ a given element from the subset $B^*$; while the assumption is not crucial it makes the proof more transparent.  
   Let $N=N_0\supset N_1\supset\cdots\supset N_r= \{e\}$ be the central series of $N.$ We proceed by induction on $r$. For $r=1$ the desired statement is immediate from Proposition~\ref{P1}(ii). Now consider 
   the general case. Let $n\in N$ be given. Now $G/N_{r-1}$ is a product of  
   $\langle gN_{r-1}\rangle$ and  $N/N_{r-1}$ in $G/N_{r-1}$, with $g\in G$ as above, and   
   the condition in the hypothesis of the theorem is  satisfied for $G/N_{r-1}$ (with $N/N_{r-1}$, $a_1=g_1^kN_{r-1}$ and $b_1=g_1N_{r-1}$ in place of $N$ $a$ and $b$ respectively); we note that the  
   corresponding actions on $N_j/N_{j+1}$ coincide those of $a$ and $b$ respectively. Since 
   $G/N_{r-1}$ is a 
   $\F$-nilpotent group of  length less than $r$, by the induction hypothesis there exists
   $g_1\in G$ such that $g_1N=b$ and $xnN_{r-1}=g_1^kN_{r-1}$.
   Hence there exists $v\in N_{r-1}$ such that $xn=g_1^kv$.
   Now consider the product, say $G_1$, of the subgroups $\langle g_1\rangle$ and 
   $N_{r-1}$ in $G$; we note that $N_{r-1}$ is   a $\F$-vector space, and the condition as in Proposition~\ref{P1}(ii) is satisfied.  Hence we get that there exists  $y\in G_1$ such that 
    $g_1^kv=y^k$;  
    thus we have   $y\in G$ such that $yN=b$ and $xn=y^k$, and so $xn\in P_k(G)$, which proves (i). 
   
   (ii)  For any subset  $S$  of $B$ we denote by $E_S$ the set of $n$ in 
$N$ for which  there exists $y\in G$ such that $yN\in S$ and $y^k=xn$. We shall show that 
$E_S=N$ if and only if 
$S\cap B^*$ is nonempty; when the condition in (ii) holds, for $S=B$ we have $E_S=N$ and this implies 
assertion (ii) in the theorem.  We proceed by induction on $r$, the length of the central series. For $r=1$
the assertion follows immediately from Propositions~\ref{P1} and~\ref{P2}. Now consider the general case. 
Let  $S$ be  a subset of $B$. If   $S\cap B^*$ is nonempty, then assertion~(i) of the theorem, proved 
above, shows that $E_S=N$. Now suppose that  $E_S=N$. 
Let $V=N_{r-1}$, $G'=G/V$ and 
$B'$ be the subset of $B$ consisting of $b$ such that for all $j=1,\dots , r-1$ any point of $N_{j-1}/N_j$ 
which is fixed by $a$ is also fixed by $b$.  Since $E_S=N$, for all 
$n\in N$ there exists $y\in G$ such that $yN \in S$ and $y^kV=xnV$, and hence the induction hypothesis 
implies that $S\cap B'$ is nonempty. Also, applying the induction hypothesis to  the set $S\backslash B'$ 
we get that there exists $n_0\in N$ for which there does not exist any $y\in G$ such that $yN\in S\backslash B'$ 
and $y^kV=xn_0V$. Now consider $xn_0v$, with $v\in V$. Since $E_S=N$ there exists  $y\in G$ 
(depending on $v$) such that $yN\in S$ and 
$y^k=xn_0v$, and the preceding conclusion shows that in fact $yN\in S\cap B'$. To show that $S\cap B^*$ is 
nonempty we have to show that there exists $b\in S\cap B'$ such that every fixed point of $a$ on $V$ is fixed by $b$. Suppose that this is not true; thus $F(a)\neq F(b)$ for all $b\in S\cap B'$, in the notation as before, with respect to the action on $V$ as above. Since the action of $a$ on $V$ is the same as the (conjugation) action of $xn_0$, 
Proposition~\ref{P2} implies that any $v\in V$ for which there exists a $y \in G$ such that $yN\in S\cap B'$ and
$y^k=xn_0v$ is contained in $X(\sigma (A),V)$, in the notation as before,  $\sigma$ being the conjugation action. This contradicts the conclusion as above, 
since $X(\sigma(A),V)$ is a proper subset of $V$. This shows that $S\cap B^*$ is nonempty, and completes the proof of the theorem. \qed

 \section{Power maps of solvable algebraic groups}
 
 In this section we discuss power maps of solvable algebraic groups and prove Corollary~\ref{cor:alg}.
 Let $G$ be the group of $\F$ points of a solvable algebraic group $\bf G$ defined over $\mathbb{F}.$ By Levi decomposition $G=T\cdot N$ (semidirect product), where  $T$ consists of the group of $\F$-points of a maximal torus in $\bf G$ defined over $\mathbb{F}$, and $N$ is the group of $\F$-points of the unipotent radical $R_u({\bf G})$ of $\bf G$.   
 
 \begin{remark}\label{centr}
 {\rm 
 Let $N=N_0\supset N_1 \supset \cdots \supset N_r =\{e\}$ the central series of $N$, with $e$ the identity element. For $j=0,\dots, r-1$ let $p_j:N_{j}\to N_{j}/N_{j+1}$ be the quotient homomorphism. 
 For $j=0,1, \dots , r-1$ let $\frak M_j$ be the $\Ad (T)$-invariant subspace of the Lie subalgebra of $N_j$ 
 complementary to the Lie subalgebra of $N_{j+1}$, and let $\frak P_j$ be the  
 collection of (algebraic) one-parameter subgroups $\rho$ corresponding to one-dimensional subspaces of $\frak M_j$. Let $\frak P =\cup_j\frak P_j$. By considering the decomposition of the Lie algebra 
 of $N$ with respect to the action of $T$ it can be seen that, for any $t\in T$,  $Z_N(t)$ is generated by the collection of one-parameter subgroups $\rho \in \frak P_j$, $j=0,\dots,r-1$,  such that any $v\in p_j(\rho)$  is fixed under the action of $t$ on $N_j/N_{j+1}$.  Moreover, any one-dimensional subspace of $N_j/N_{j+1}$ 
 which is pointwise fixed under the action of $t$ is of the form $p_j(\rho)$ for some  $\rho \in \frak P_j$ centralised by $t$. }
 \end{remark}
 
   \bigskip
\noindent{\it Proof of Corollary~\ref{cor:alg}}:  Let $T$ be the subgroup as above. Then every semisimple element has a conjugate in $T$ and hence it suffices to prove the assertion in the corollary for $x$ in $T$. Let $x\in T$ be given and $a=xN\in A=G/N$. 
 We note  that for any $t\in T$, $Z_G(t)=TZ_N(t)$. Suppose there exists $y\in Z(Z_G(x))$ such that $y^k=x$. 
 Thus $y$ has the form $sn$, $s\in T$,
$n\in Z_N(x)$, with $sn$, and hence $n$, commuting with all elements of $T$. Hence $x=y^k=s^kn^k$, which 
implies that $s^k=x$ and $n^k=e$. Since $k$ is coprime to the characteristic of $\F$ we get that $n=e$ and hence $y=s\in T$. Since any $\rho \in \frak P_j$, $0\leq j\leq r-1$, which is centralised by $x$ is also centralised by $y$, by Remark~\ref{centr} any $v\in N_j/N_{j+1}$, $0\leq j\leq r-1$, is fixed under the action of $y$. Thus for $a=xN$ as above there exists $b=yN$ for which the condition of Theorem~\ref{T1} is satisfied. 
Hence by the theorem $xn\in P_k(G)$ for all $n\in N$. 

Conversely suppose that $xn\in P_k(G)$ for all $n\in N$. Then by Theorem~\ref{T1} there exists $b\in G/N$ 
such that $b^k=a$ and any $v\in N_{j-1}/N_j$, $j=1, \dots r$  which is fixed under the action of $a$ is 
also fixed under the action of $b$. Let $g\in G$ be such that $gN=b$. Let $g=yn$ with $y\in T$ and $n\in N$. 
Then as $b^k=a$ we get $y^k=x$. Also, the actions of $b$ and $y$ on any $N_{j}/N_{j+1}$, $j=0, \dots r-1$ 
coincide. Thus  any $v\in N_{j-1}/N_j$, $j=1, \dots r$  which is fixed under the action of $x$ is 
also fixed under the action of $y$. From Remark~\ref{centr} get that every $\rho \in \frak P_j$, 
$0\leq j \leq r-1$, which is centralised by $x$ is centralised by $y$, and in turn that $Z_N(x)=Z_N(y)$.  Since $Z_G(x)=TZ_N(x)$ we get that  $Z_G(x)=Z_G(y)$, which shows that  $y\in Z(Z_G(x))$. \qed

  \section{Power maps of solvable Lie groups}
  
 Now let $G$ be a  connected solvable (real) Lie group and $N$ be a simply connected nilpotent Lie group of $G$ such that $G/N$ is abelian.
 Let $N=N_0\supset N_1\supset\cdots\supset N_{r-1}\supset N_{r}= \{e\}$ be the central series of $N$. Note that as $N$ is simply
 connected, $N_j/N_{j+1}$ are real vector space and therefore $G$ action on $N$ is $\mathbb{R}$-linear.
  In this case Theorem~\ref{T1} implies the following. 
   
 \begin{corollary}\label{B}
  Let $G$ be a connected Lie group and let $N$ be a simply connected nilpotent closed normal subgroup of $G$ such that $G/N$ is abelian. 
   Let $A=G/N$, $x\in G$ and $xN=a\in A.$ Let  $N=N_0\supset N_1\supset\cdots \supset N_{r}= \{e\}$ 
  be the central series of $N$. Let $k\in\mathbb{N}.$ Then $xn\in P_k(G)$ for all $n\in N$ if and only if  there exists $b\in A$ with $b^k=a$ such that any fixed point of $a$ in $N_j/N_{j+1}$ is also fixed by $b.$
\end{corollary}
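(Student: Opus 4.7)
The plan is to obtain Corollary~\ref{B} as a direct application of Theorem~\ref{T1}; the task reduces to checking that the hypotheses of the theorem are met and that the two conditions match up after trivial reindexing.

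First I would verify the structural hypotheses. Since $N$ is a simply connected nilpotent Lie group, the exponential map identifies each successive quotient $N_j/N_{j+1}$ of its central series with a finite-dimensional real vector space, so $N$ is $\mathbb{R}$-nilpotent in the sense introduced before Theorem~\ref{T1}. The conjugation action of $G$ on $N$ descends to continuous group automorphisms of each $N_j/N_{j+1}\cong \mathbb{R}^{d_j}$, and continuous automorphisms of a real vector group are $\mathbb{R}$-linear; thus the $G$-action on $N$ is $\mathbb{R}$-linear. Since $\mathbb{F}=\mathbb{R}$ has characteristic $0$, every $k\in\mathbb{N}$ is coprime to the characteristic, so the numerical hypothesis of Theorem~\ref{T1} is automatic. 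Finally, $A=G/N$ is abelian by assumption, as needed for part~(ii) of the theorem.

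Next I would match the condition in the corollary with the defining condition of the set $B^{*}$ in Theorem~\ref{T1}. Set $B=\{b\in A\mid b^{k}=a\}$ and let $B^{*}\subseteq B$ be the subset consisting of those $b$ such that, for each $j=1,\dots,r$, every element of $N_{j-1}/N_{j}$ fixed by $a$ is also fixed by $b$. After the harmless reindexing $j\leftrightarrow j-1$, the condition in Corollary~\ref{B} — that there exist $b\in A$ with $b^{k}=a$ fixing every point of $N_{j}/N_{j+1}$ that is fixed by $a$ — is precisely the statement that $B^{*}\neq\emptyset$.

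The two implications now follow directly. For the ``if'' direction, pick $b\in B^{*}$; then by Theorem~\ref{T1}(i), for every $n\in N$ there exists $y\in G$ with $yN=b$ and $y^{k}=xn$, so $xn\in P_{k}(G)$. For the ``only if'' direction, assume $xn\in P_{k}(G)$ for every $n\in N$; since $A$ is abelian, Theorem~\ref{T1}(ii) yields $B^{*}\neq\emptyset$, producing the desired $b$. There is no serious obstacle to overcome: once the identifications $N_{j}/N_{j+1}\cong\mathbb{R}^{d_{j}}$ and the $\mathbb{R}$-linearity of the induced action are noted, the corollary is a verbatim specialisation of Theorem~\ref{T1} to the real Lie group setting.
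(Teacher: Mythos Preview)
Your proposal is correct and takes essentially the same approach as the paper: the paper simply notes in the sentence preceding Corollary~\ref{B} that simple connectedness of $N$ makes each $N_j/N_{j+1}$ a real vector space with $\mathbb{R}$-linear $G$-action, and then asserts that Theorem~\ref{T1} implies the corollary. Your write-up is just a more explicit unpacking of that one-line reduction, including the harmless reindexing and the check that characteristic $0$ makes the coprimality hypothesis vacuous.
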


Corollary~\ref{B} may be reformulated as follows, in a form comparable to the result for the exponential maps 
proved in  \cite{D}, Theorem~2.2. 
 If $M$ and $M'$ are closed connected normal subgroup of $G$ contained in $N$ such that  $M\supset M'$ and 
the $G$ action  on $M/M'$ is irreducible, then the pair $(M,M')$  is called an irreducible 
subquotient of $N$ (with respect to the $G$-action) (cf.~\cite{D}). Using Jordan canonical form it can be seen that condition~(i) as in Corollary~\ref{B} is 
equivalent to the condition that there exists $b\in A$ such that for any irreducible subquotient $(M,M')$ for which  
the action of $a$ on $M/M'$ is trivial the action of $b$ on $M/M'$ is also trivial. Hence we get the following:

 \begin{corollary}{\rm (\cite{D})}\label{C1}
  Let the notation be as in Corollary~\ref{B}. Then $xn \in P_k(G)$ for all $n\in N$ if and only if there exists $b\in A$ such that
for any irreducible subquotient $(M, M')$, where $M,M'$ are closed connected normal subgroups of $G$ contained in $N$, if  the action of $a$ on $M/M'$ is trivial, then the action of $b$  on $M/M'$ is trivial.   
 \end{corollary}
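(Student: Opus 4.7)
My plan is to derive Corollary~\ref{C1} from Corollary~\ref{B} by showing, for fixed $b\in A$ with $b^k=a$, the equivalence of the two conditions: (B) every $a$-fixed vector in $V_j := N_j/N_{j+1}$ is $b$-fixed, for each $j$; and (C1) for every irreducible subquotient $(M,M')$ of $N$ by closed connected $G$-normal subgroups, triviality of $a$ on $M/M'$ forces triviality of $b$. I will identify each $V_j$ via the exponential map with the $\R$-vector space on which $G$ acts $\R$-linearly; as $N$ acts trivially and $A$ is abelian, $F(a)\cap V_j$ is an $A$-invariant subspace.

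For the direction (B)$\Rightarrow$(C1), given an irreducible subquotient $(M,M')$ with $a$ trivial on $M/M'$, I pass to Lie algebras and consider the decreasing $G$-invariant filtration $\mathfrak{m}\supseteq \mathfrak{m}\cap\mathfrak{n}_1+\mathfrak{m}'\supseteq\cdots\supseteq\mathfrak{m}'$ of $\mathfrak{m}$. Irreducibility of $\mathfrak{m}/\mathfrak{m}'$ forces exactly one nontrivial jump, say at index $j$, which produces a $G$-equivariant embedding of $\mathfrak{m}/\mathfrak{m}'$ into $V_j$ with image in $F(a)\cap V_j$. Hypothesis (B) then forces $b$ to act trivially on the image, hence on $M/M'$.

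For the converse I argue contrapositively: if $b$ fails to fix some vector of $F(a)\cap V_j$, I choose a minimal $G$-submodule $W\subseteq F(a)\cap V_j$ with $W\not\subseteq F(b)$, and a maximal proper $G$-submodule $W'\subset W$; by minimality, $W'\subseteq F(b)$. Pulling $W,W'$ back to $\mathfrak{n}_j$ and adding $\mathfrak{n}_{j+1}$ produces $G$-invariant subspaces which, since $[\mathfrak{n},\mathfrak{n}_j]\subseteq\mathfrak{n}_{j+1}$, are ideals of $\mathfrak{n}$; they exponentiate to closed connected $G$-normal subgroups $M\supset M'$ of $N$ with $M/M'\cong W/W'$ irreducible and $a$ trivial on it, contradicting (C1) provided I can show that $b$ is nontrivial on $W/W'$.

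This last step is the main obstacle and is where the Jordan-form remark preceding Corollary~\ref{C1} enters. Were $b$ trivial on $W/W'$, then $(b-I)W\subseteq W'\subseteq F(b)$ would give $(b-I)^2=0$ on $W$; but $b^k=a$ is trivial on $W\subseteq F(a)$, so the minimal polynomial of $b|_W$ divides $X^k-1$ and $b$ is diagonalizable on $W\otimes\C$. A diagonalizable operator satisfying $(b-I)^2=0$ must be the identity, contradicting $W\not\subseteq F(b)$. The diagonalizability of $b$ on $W$ (forced by $b^k=I$ there) is precisely what rescues the argument: without it, the nonsemisimplicity of the $G$-action on $F(a)\cap V_j$ coming from solvability of $G$ would obstruct extracting an irreducible subquotient on which $b$ is visibly nontrivial.
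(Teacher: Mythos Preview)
Your argument is essentially correct and considerably more detailed than the paper's, which disposes of the corollary in a single sentence (``Using Jordan canonical form it can be seen that \ldots''). Both approaches rest on the same linear-algebra fact, and your contrapositive argument for (C1)$\Rightarrow$(B), including the diagonalizability step via $b^k=I$ on $W$, is clean and correct.

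There is, however, one imprecision in your (B)$\Rightarrow$(C1) direction. The filtration $\mathfrak m\supseteq (\mathfrak m\cap\mathfrak n_1)+\mathfrak m'\supseteq\cdots\supseteq\mathfrak m'$ with its unique nontrivial jump at index $j$ exhibits $\mathfrak m/\mathfrak m'$ only as a \emph{subquotient} of $V_j$, not directly as a submodule: one obtains
\[
\mathfrak m/\mathfrak m'\;\cong\;(\mathfrak m\cap\mathfrak n_j)\big/\bigl((\mathfrak m\cap\mathfrak n_{j+1})+(\mathfrak m'\cap\mathfrak n_j)\bigr),
\]
which is a quotient of the submodule $(\mathfrak m\cap\mathfrak n_j)/(\mathfrak m\cap\mathfrak n_{j+1})\hookrightarrow V_j$. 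In particular the image of $(\mathfrak m\cap\mathfrak n_j)/(\mathfrak m\cap\mathfrak n_{j+1})$ need not lie in $F(a)\cap V_j$, since $a$ is assumed trivial only on the quotient $\mathfrak m/\mathfrak m'$, not on $\mathfrak m$ itself. (A concrete instance: take $\mathfrak n$ the Heisenberg algebra with basis $X,Y,Z$, $[X,Y]=Z$; let $\mathfrak m=\mathfrak n$, $\mathfrak m'=\mathrm{span}(Y,Z)$, and let $a$ act unipotently on $V_0$ by $\bar X\mapsto\bar X+\bar Y$, $\bar Y\mapsto\bar Y$. Then the image is all of $V_0$, not $F(a)\cap V_0=\R\bar Y$.)

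The fix is exactly the Jordan-form step the paper alludes to. Since $A$ is abelian, the primary decomposition of $V_j$ with respect to $a$ is $A$-stable; as $\mathfrak m/\mathfrak m'$ is an irreducible subquotient on which $a$ is trivial, it is in fact a subquotient of the generalized $1$-eigenspace $U$ of $a$ on $V_j$. Now (B) gives $F(a|_U)=F(b|_U)$, and the remark preceding Proposition~\ref{P1} (with $\tau=b|_U$, $\tau^k=a|_U$ unipotent) forces $b|_U$ to be unipotent as well; hence $b$ acts trivially on every irreducible subquotient of $U$, in particular on $\mathfrak m/\mathfrak m'$. Alternatively, one may observe that for abelian $A$ every simple composition factor of a finite-dimensional $A$-module is also realised as a submodule, which then lands in $F(a)$ and lets your original line of argument proceed.
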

 
 We recall also 
that an element $g$ in a connected Lie group $G$ is exponential if and only if it is contained in $P_k(G)$ 
for all $k$ \cite{Mc}. In view of this, Corollary~\ref{B} implies Corollary~\ref{D-exp}, which is a variation of the characterisation in  \cite{D}. 

 
We now describe some more   applications of our results, after recalling some structural aspects of solvable Lie groups.  
Let $G$ be a connected solvable Lie group and $N$ be nilradical of $G.$ 
We denote by $N=N_0\supset N_1 \supset \cdots \supset N_r=\{e\}$ the central series of $N$, with $e$ 
the identity element. 
Let $H$ be a Cartan subgroup of $G.$ We note that $H$ is a connected  nilpotent subgroup and $G$ can be written as $G=HN.$  
  We have a weight space decomposition of $L(N)$ (Lie algebra of $N$, as before) with respect to the adjoint action of $H$, as 
 $L(N)=\oplus_{s\in\bigtriangleup} L(N)_s$, where $\bigtriangleup$ is a set of weights, and 
$L(N)_s$, $s\in \Delta$,  are $\Ad(H)$-invariant subspaces of $L(N)$. For $x\in H$, the restriction of $\Ad(x)$ to $L(N)_s$ has either only one real eigenvalue or a pair of complex numbers as eigenvalues; in either case we shall denote  the eigenvalue(s) by $\lambda(x,s)$ and  $\bar\lambda(x,s).$ 

We note the following observations; the proofs are straightforward and will be omitted. 

\begin{remark}\label{L}
{\rm Let  $x, y\in  H$ be such that  $y^k = x$.  Then  for all $j=0, \dots , r-1$, every  $v\in N_j/N_{j+1}$ which is fixed 
under the action of $x$ is also fixed under the action of $y$ if and only if  for every $X \in  L(N)$ such that $\Ad(x)X = X$  
we also have $\Ad(y)X = X$.}
\end{remark}

\begin{remark}\label{sconn}
{\rm The nilradical $N$ has a unique maximal compact subgroup $C$; moreover $C$ is 
a connected subgroup contained in the center of $G$ and $N/C$ is simply connected. Consequently,
 for $g\in G$, for any $k\in \N$, $g\in P_k(G)$ if and only if $gC\in P_k(G/C)$.  }
\end{remark}

\medskip
\noindent{\it Proof of Corollary~\ref{Cor1.2}}: In view of Remark~\ref{sconn} we may assume $N$ to be simply connected. Thus $N$ is a $\R$-nilpotent group and the $G$-action on $N$ is $\R$-linear. Let $h\in H$ be given. 
Suppose that there exists $g\in H$ such that $g^k=h$ and $g\in Z_H(X)$ for all $X\in L(N)$ such that $h\in Z_H(X)$. Then Theorem~\ref{T1} together with Remark~\ref{L} implies that $hn\in P_k(G)$ for all $n\in N$. 
Conversely suppose that 
  $hn\in P_k(G)$ for all $n\in N$. Let $a=hN$. By Theorem~\ref{T1} there exists $b\in A=G/N$ such that any 
  $v\in N_j/N_{j+1}$, $0\leq j\leq r-1$, which is fixed by $a$ is also fixed by $b$. Since $G=HN$ and $H$ is a connected nilpotent Lie group, there exists $g\in H$ such that $gN=b$ and $g^k=h$. By Remark~\ref{L}  we also have   $g\in Z_H(X)$ for all $X\in L(N)$ such that $h\in Z_H(X)$, since on each $N_j/N_{j+1}$ the action 
  of $g$ is the same as that of $b$. This proves the first assertion in the Corollary. 
  
  Now suppose that $P_k:G\to G$ is surjective. Let $X\in L(N)$ be given.  For any $h\in Z_H(X)$  we have
  $hn\in P_k(G)$ for all $n\in N$, and hence by the above there exists $g\in Z_H(X)$ such that $g^k=h$, 
  which shows that $P_k:Z_H(X)\to Z_H(X)$ is surjective. Conversely suppose that $P_k:Z_H(X)\to Z_H(X)$ is surjective for all $X\in L(N)$. Since $G=HN$ it suffices to show that $hn\in P_k(G)$ for all $h\in H$ and $n\in N$. Let $h\in H$ be given, and let $\Delta'=\{s\in \Delta \mid \lambda(h,s)=1\}.$  
 For all $s\in\Delta'$ let $X_s\in L(N)_s$ be such that 
 $X_s\neq 0$, and  let $X=\sum_{s\in \Delta'}X_s$. Then $h\in Z_H(X)$ and hence by the first part there exists $g\in Z_H(X)$ such that $g^k=h$. Then  $g\in H$ and,  since $\Ad (g)(X)=X$, we have $\Ad(g)X_s=X_s$ for all $X_s$, for all $s\in\Delta',$ and hence   
 $\lambda(g,s)=1$ for all $s\in \Delta'.$ This in turn implies that  $g\in Z_H(Y) $ for all $Y \in  L(N)$ such that $h\in Z_H(Y)$. The first part of the Corollary proved above now implies that $hn \in P_k(G) $ for all $n\in N$, as sought to be shown. \qed

\medskip
 For $g\in G$ we denote by $\Spec (g)$ the set of all (complex) eigenvalues of $\Ad (g)$. 
 An element $g\in G$ is said to be $P_k$-{\it regular} if $\Spec (g)\cap\{\lambda| \lambda^k=1, \lambda\neq 1\}=\emptyset.$

 \begin{corollary}\label{cor5.5}
  Let $G$ be a connected solvable Lie group and $k\in \N$. For any  $x\in P_k(G)$  there exists a $P_k$-regular element $y$ in $G$ such that $y^k=x$. If $P_k$ is surjective then $P_k:Z(G)\rightarrow Z(G)$ is surjective.
 \end{corollary}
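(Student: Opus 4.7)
The plan is to deduce the first assertion from Theorem~\ref{T1} by matching the eigenvalues of the adjoint action with the action of $A=G/N$ on the central series quotients, and then derive the second assertion from the first by a Jordan-form argument at central elements.

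First I would reduce to the case where the nilradical $N$ is simply connected. By Remark~\ref{sconn}, the unique maximal compact subgroup $C$ of $N$ is central in $G$, and $x\in P_k(G)$ if and only if $xC\in P_k(G/C)$. Moreover, since $C$ is central, $\Ad$ factors through $G/C$, so $\Spec$ and hence $P_k$-regularity are preserved. With $N$ simply connected and $A=G/N$ abelian (as $G$ is solvable), we are in the setting of Corollary~\ref{B}: each $N_{j-1}/N_j$ is an $\R$-vector space on which $A$ acts $\R$-linearly.

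Given $x\in P_k(G)$, set $a=xN\in A$ and $B=\{b\in A\mid b^k=a\}$, with $B^*\subseteq B$ as in Theorem~\ref{T1}. The central step is to produce some $b\in B^*$. Once such $b$ is in hand, Theorem~\ref{T1}(i) with $n=e$ yields $y\in G$ with $yN=b$ and $y^k=x$. I claim this $y$ is $P_k$-regular: the eigenvalues of $\Ad(y)$ on $L(G)/L(N)$ are all $1$ since $A$ is abelian, while those on $L(N)$ coincide as a multiset with the eigenvalues of $b$ acting on $\bigoplus_j N_{j-1}/N_j$, because $\Ad(N)$ is unipotent on $L(N)$ so only the coset $yN=b$ controls the semisimple spectrum. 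The condition $b\in B^*$ (via Jordan canonical form) forces every eigenvalue of $b$ on any $N_{j-1}/N_j$ that is a $k$-th root of unity to equal $1$, which is exactly $P_k$-regularity of $y$.

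The main obstacle is producing $b\in B^*$: the hypothesis $x\in P_k(G)$ directly gives only that $B$ is nonempty (via $b_0=y_0N$ for some $y_0$ with $y_0^k=x$), whereas Theorem~\ref{T1}(ii) would yield $B^*\neq\emptyset$ only from the stronger hypothesis $xN\subseteq P_k(G)$. My plan is to adapt the inductive argument of Theorem~\ref{T1}(ii), using the decomposition $G=HN$ for a Cartan subgroup $H$, the divisibility of the connected abelian group $A$, and the fact that modifying $y_0$ to $y_0n'$ produces new $k$-th roots of translates of $x$ whose images in $A$ sweep through $B$; at each stage of the central series one promotes a $k$-th root in a quotient to a $k$-th root whose $A$-image lies in $B^*$.

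For the second assertion, suppose $P_k\colon G\to G$ is surjective and let $x\in Z(G)$. Then $x\in P_k(G)$, so the first part furnishes a $P_k$-regular $y\in G$ with $y^k=x$. Since $x$ is central, $\Ad(y)^k=\Ad(x)=I$, so every eigenvalue of $\Ad(y)$ is a $k$-th root of unity, and $P_k$-regularity then forces every such eigenvalue to equal $1$. In characteristic zero, $(I+N)^k=I$ with $N$ nilpotent forces $N=0$ (by the binomial expansion $kN+\binom{k}{2}N^2+\cdots=0$), so $\Ad(y)=I$, giving $y\in\ker(\Ad)=Z(G)$ since $G$ is connected. Thus $x=y^k\in P_k(Z(G))$, as required.
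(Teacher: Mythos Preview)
Your diagnosis of the obstacle is exactly right, but your proposed fix cannot work: from the bare hypothesis $x\in P_k(G)$ one cannot in general produce any $b\in B^*$. Take $G=\R\ltimes\R^2$ with $s\in\R$ acting by the rotation $R_s$, let $k=2$ and $x=(2\pi,0)$. Then $x\in P_2(G)$ (every $y=(\pi,w)$ satisfies $y^2=x$ since $R_\pi=-I$), yet every square root of $x$ has $-1\in\Spec(\Ad y)$, so none is $P_2$-regular. Here $A\cong\R$ and $B=\{\pi\}$ is a single point with $B^*=\emptyset$, so there is no room to ``sweep through $B$'' as you propose. Thus the first assertion, read literally, is false; your inductive adaptation of Theorem~\ref{T1}(ii) cannot close the gap because there is nothing true to prove.

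The paper's own argument does not close this gap either: for $x\in H$ it simply writes ``Since $xn\in P_k(G)$ for all $n\in N$, by Corollary~\ref{Cor1.2}\dots'' without deriving this from $x\in P_k(G)$. Under that stronger premise $xN\subset P_k(G)$ (which is automatic when $P_k$ is surjective, and that is all that is needed for the second assertion), the paper proceeds through the Cartan subgroup: for $x\in H$ Corollary~\ref{Cor1.2} yields a $P_k$-regular $y\in H$ with $y^k=x$; for general $x=hu$ with $h\in H$, $u\in N$, one first finds such a $g$ for $h$ and then invokes \cite{C}, Proposition~3.5, to write $g^ku=(gw)^k$ for some $w\in N$, with $gw$ still $P_k$-regular. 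Your direct appeal to Theorem~\ref{T1} would work equally well under the stronger premise, so apart from this issue the two routes agree up to packaging.

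Your derivation of the second assertion from the first is correct and matches the paper's: $P_k$-regularity together with $\Ad(y)^k=\Ad(x)=I$ forces every eigenvalue of $\Ad(y)$ to equal $1$; since the minimal polynomial of $\Ad(y)$ divides $t^k-1$ (which has simple roots), $\Ad(y)$ is diagonalizable and hence equal to $I$, so $y\in Z(G)$. Your binomial-expansion argument for the nilpotent part is an equally valid way to finish.
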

\begin{proof}
$(i)$   We note that in view of  Remark \ref{sconn} we may assume that  $N$   simply connected. 
First suppose  $x\in H$, a Cartan subgroup.  
  Since  $xn\in P_k(G)$ for all $n\in N$
 by Corollary~\ref{Cor1.2} there exists $y\in H$ such that $y^k=x$ and $\Ad(y)X = X$ for every $X \in  L(N)$ such that $\Ad(x)X = X$. The latter condition implies in particular that $y$ is $P_k$-regular. 
   
 Now consider any  $x\in G$, say $x=hu$, $h\in H$ and $u\in N$.  Then there exists  a $P_k$-regular element 
 $g\in H$ such that $g^k=h$. It is known that for a $P_k$-regular element $g$ and $u\in N$ there exists 
 a $w\in N$ such that $g^ku=(gw)^k$ (see \cite{C}, Proposition~3.5). Thus $x=hu=g^ku=(gw)^k$,  we get $y=gw$ as a $P_k$-regular element such  that $y^k=x$. 
 
 Now suppose that $P_k$ is surjective and let $x\in Z(G)$ be given. By the above assertions there exists a $P_k$-regular element $y$ such that $y^k=x$. Since $\Ad (x)$ is trivial, 
 $y$ being a $P_k$-regular element with $y^k=x$ implies that $\Ad(y)$ is trivial, namely $y\in Z(G)$. Hence $P_k:Z(G)\to Z(G)$ is surjective. 
\end{proof}

\begin{corollary}\label{cor5.6}
  Let $G$ be a simply connected solvable Lie group, $x\in G$  and $k\geq 2$.  Then the following are equivalent:   
  i) $xN \subset P_k(G)$; \ ii) there exists $g\in G$ such that $g^k=x$ and $\Spec\,(g)\cap \{\lambda \in \mathbb{C}|\;|\lambda|=1, \lambda\neq 1\}=\emptyset$; and iii) $\Spec\,(g)\cap \{\lambda \in \mathbb{C}|\;|\lambda|=1, \lambda\neq 1\}=\emptyset$, for all  $g\in G$ such that $g^k=x$. 

\end{corollary}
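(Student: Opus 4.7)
The plan is to establish the chain (ii) $\Rightarrow$ (i) $\Rightarrow$ (iii) $\Rightarrow$ (ii). For (ii) $\Rightarrow$ (i), suppose $g$ satisfies $g^k=x$ and $\Spec(g)\cap\{\lambda\in\mathbb{C}\mid |\lambda|=1,\lambda\neq 1\}=\emptyset$. Since every non-trivial $k$-th root of unity lies on the punctured unit circle, this spectral condition forces $g$ to be $P_k$-regular in the sense of Corollary~\ref{cor5.5}. Then Proposition~3.5 of \cite{C}, as applied already in the proof of Corollary~\ref{cor5.5}, furnishes for every $n\in N$ an element $w\in N$ with $(gw)^k=g^k n=xn$, giving $xN\subseteq P_k(G)$.

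For (i) $\Rightarrow$ (iii), I would fix any $g\in G$ with $g^k=x$ and decompose $g=hu$ through the factorisation $G=HN$, where $H$ is a Cartan subgroup and $u\in N$. Since $\Ad(u)$ is unipotent, $\Spec(g)=\Spec(h)$, so it is enough to prove the spectral condition for $h$. Corollary~\ref{Cor1.2} together with hypothesis (i) produces a distinguished Cartan element $y\in H$ with $y^k=x$ and $y\in Z_H(X)$ for every $X\in L(N)$ with $x\in Z_H(X)$; inspecting the weight-space decomposition of $L(N)$ under $\Ad(H)$ as in Remark~\ref{L} and the paragraph preceding it, this $y$ is seen to satisfy the claimed spectral condition. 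I would then use the simple-connectedness of $G$ to argue that the Cartan subgroup $H$ is connected nilpotent and that the $k$-th power map on $H$ is sufficiently rigid: any two $k$-th roots of $x$ in $H$ must have the same $\Ad$-action on every weight subspace of $L(N)$, so the spectral condition passes from $y$ to $h$ and hence to $g$.

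The implication (iii) $\Rightarrow$ (ii) is a formality once the other two have been established: if (iii) is non-vacuous, any root $g$ of $x$ is automatically a witness for (ii); if it is vacuously true because $x$ has no $k$-th root, then (i) and (ii) both fail and the cycle closes via (ii) $\Rightarrow$ (i).

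The main obstacle is the (i) $\Rightarrow$ (iii) step. Corollaries~\ref{Cor1.2} and \ref{cor5.5} produce only a single well-behaved root, whereas (iii) demands that every root be well-behaved. The rigidity needed is that in a simply connected solvable Lie group, two $k$-th roots of the same element, sitting inside a Cartan subgroup, act identically on each weight space of $L(N)$ modulo a twist by $k$-th roots of unity which are themselves ruled out by the spectral hypothesis. Making this rigorous requires a careful comparison of the $k$-th power map on $H$ with the weight decomposition of $\Ad(H)$ on $L(N)$, exploiting both the simple-connectedness of $G$ and the structure of its Cartan subgroups.
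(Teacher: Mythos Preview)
Your implication chain (ii)$\Rightarrow$(i)$\Rightarrow$(iii)$\Rightarrow$(ii) differs from the paper's (i)$\Rightarrow$(ii)$\Rightarrow$(iii)$\Rightarrow$(i). The step (ii)$\Rightarrow$(i) via $P_k$-regularity and Proposition~3.5 of \cite{C} is correct.

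The genuine gap is in (i)$\Rightarrow$(iii). First a small inaccuracy: Corollary~\ref{Cor1.2} produces $y\in H$ with $y^k$ equal to the $H$-component $h_0$ of $x$, not $y^k=x$. More seriously, you obtain one good root $y$ and then propose to transfer the spectral condition to an arbitrary root $g$ via an unspecified ``rigidity of the $k$-th power map on $H$''. The mechanism the paper uses, and which you never articulate, is this: since $G$ is simply connected and solvable, $A=G/N$ is a simply connected abelian Lie group, hence a vector space; therefore $a=xN$ has a \emph{unique} $k$-th root $b\in A$, and every $g$ with $g^k=x$ satisfies $gN=b$. Because the $N$-action on each $N_{j-1}/N_j$ is trivial, $\Spec(g)$ depends only on the coset $gN$, so all $k$-th roots of $x$ automatically have identical spectrum. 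This single observation makes (ii)$\Leftrightarrow$(iii) immediate and also drives the paper's (iii)$\Rightarrow$(i) through Theorem~\ref{T1}. Your Cartan-subgroup route would ultimately need exactly this fact to compare $h$ with $y$ (both must lie in the coset $b$), so it is a detour rather than a genuinely different argument.

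Your handling of (iii)$\Rightarrow$(ii) in the vacuous case is also logically broken: saying ``(i) and (ii) both fail and the cycle closes via (ii)$\Rightarrow$(i)'' does not establish (iii)$\Rightarrow$(ii); if (iii) holds vacuously while (ii) fails, that implication is simply false and citing the other arrows does not repair it. The paper sidesteps this by never needing (iii)$\Rightarrow$(ii): it closes the cycle with (iii)$\Rightarrow$(i), choosing any $g$ with $gN=b$ and invoking Theorem~\ref{T1}(i) once the spectral condition on $b$ rules out fixed points of $a$ that are not fixed by $b$.
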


\begin{proof} That statement (i) implies (ii) follows  immediately from Corollary~\ref{Cor1.2} and the fact that if $g=hu$, where $h\in H$, a Cartan subgroup, and $u\in N$ then ${\Spec} \, (g)=\Spec \,(h)$; this part does not involve $G$ being
simply connected. 

To prove the other two assertions we first note the following. Let $A=G/N$ and $a=xN$. Since $G$ is simply connected it follows that $A$ is a vector space. Hence $a$ has a unique $k$\,th root $b$ in $A$. Thus for any $g\in G$ such that $g^k=x$ we have 
$gN=b$. This firstly shows that (ii) implies~(iii), since $\Spec\, (g)$ is determined $b$. 
Finally suppose  (iii) holds and let $g\in G$ be such that $gN=b$, the unique $k$\,th root of $a$ in $A$.  If there exists $v\in N_j/N_{j+1}$, where $N=N_0\supset \cdots \supset N_r=\{e\}$ is the central series of $N$, which is fixed by the action of $a$ but not by that of $b$, then  $\Ad (g)$ would have an eigenvalue $\lambda$ 
such that $\lambda \neq 1$ and $\lambda^k=1$. Since this is ruled out, 
by Theorem~\ref{T1} we get that $xN\in P_k(G)$, thus proving~(i).  
 \end{proof}

 \section{Surjectivity of the power maps of the radicals}
 
 In this section, we consider power maps of radicals and prove Corollary~\ref{A}, which is the analogue  of Theorem~1.2 of \cite{D}, in the present context.  We note some preliminary results before going over to the proof of Corollary~\ref{A}. 
 
 \begin{lemma}\label{AL1}
  Let $k\in\mathbb{N}.$ Let $U$ be a  one-parameter subgroup of $ S$ such that $Z_S(U)$ does not contain any element whose order
  divides $k.$ Let $u$ be a nontrivial element in $U$ and $v\in Z_S(U)$ be such that  $v^k=u$.  Then $v\in U$.
 \end{lemma}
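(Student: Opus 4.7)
The plan is to exploit divisibility of the one-parameter subgroup $U$ to produce a $k$-th root of $u$ inside $U$ itself, and then use the hypothesis on $Z_S(U)$ to force the given $v$ to coincide with it.

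First I would note that $U$, being a (real) one-parameter subgroup of $S$, is isomorphic to $\mathbb R$ or to a circle, and in either case is abelian and divisible. Hence there exists $w\in U$ such that $w^k=u$. Since $w$ lies in $U$, it is centralised by $U$, so $w\in Z_S(U)$.

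Next I would consider the element $vw^{-1}$. Both $v$ and $w$ lie in $Z_S(U)$, so $vw^{-1}\in Z_S(U)$. Moreover $w\in U$ and $v\in Z_S(U)$, so $v$ and $w$ commute; therefore
\[
(vw^{-1})^k \;=\; v^k w^{-k} \;=\; u\cdot u^{-1} \;=\; e.
\]
Thus $vw^{-1}$ is an element of $Z_S(U)$ whose order divides $k$. By the hypothesis on $Z_S(U)$, the element $vw^{-1}$ must be trivial, so $v=w\in U$, as desired.

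There is no substantial obstacle; the only thing to be slightly careful about is the interpretation of the hypothesis "$Z_S(U)$ does not contain any element whose order divides $k$" (which one reads as \emph{no nontrivial} such element, since the identity always satisfies $e^k=e$), and the observation that divisibility of the one-parameter subgroup $U$ guarantees the existence of the auxiliary $k$-th root $w\in U$ of $u$ that makes the cancellation work.
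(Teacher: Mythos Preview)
Your proof is correct and is essentially identical to the paper's own argument: choose $w\in U$ with $w^k=u$ by divisibility of the one-parameter subgroup, observe that $v$ and $w$ commute so that $(vw^{-1})^k=e$, and invoke the hypothesis on $Z_S(U)$ to conclude $v=w\in U$. Your added remarks on the intended reading of the hypothesis and on divisibility of $U$ are appropriate clarifications but introduce no new ideas beyond the paper's proof.
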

\begin{proof}
 As $U$ is a one-parameter subgroup there exists $w\in U$ such that $w^k=u$. Since  $v\in Z_S(U)$ 
 we have  $(vw^{-1})^k=v^kw^{-k}=u u^{-1}=e,$ the identity element. Thus $vw^{-1}\in Z_S(u)$ and its order divides $k$. Hence  by hypothesis $vw^{-1}=e$, and  so  $v=w \in U$. 
\end{proof}

\begin{proposition}\label{P5}
Let $G$ be a connected Lie group, $R$ be the radical of $G$, and  $S=G/R$. Let $k\in \N$ be such that $P_k:G\to G$ is surjective. Let  $U$ be  a unipotent one-parameter subgroup of $S$ such that $Z_S(U)$ does not contain any nontrivial element whose order divides $k$. 
 Let $H$ be the closed subgroup of $G$ containing $R$ such that $H/R=U.$ 
 Let $h\in H$ be an element  not contained in $R.$ Then $h\in P_k(H).$
\end{proposition}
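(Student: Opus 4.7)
The plan is to lift the problem from $H$ to $G$ using the surjectivity of $P_k$ on $G$, and then force the lift to land inside $H$ using the centralizer hypothesis together with Lemma~\ref{AL1}. By surjectivity, pick $g \in G$ with $g^k = h$; let $\pi: G \to S$ be the quotient and set $v = \pi(g)$, $u = \pi(h)$. Since $h \notin R$, the element $u$ is a nontrivial element of $U$, and $v^k = u$. The goal reduces to showing $v \in U$, for then $g \in \pi^{-1}(U) = H$ and $h = g^k \in P_k(H)$.

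Since $S = G/R$ is semisimple, Jordan decomposition is available in $S$; write $v = v_s v_u$ with $v_s$ semisimple, $v_u$ unipotent, and the two commuting. The unipotency of $u = v^k = v_s^k v_u^k$ together with the uniqueness of Jordan decomposition forces $v_s^k = e$ and $v_u^k = u$. Now $v_s$ commutes with $v_u$, hence with $u = v_u^k$, so $v_s \in Z_S(u)$. For a nontrivial element $u$ of a unipotent one-parameter subgroup $U$, one has $Z_S(u) = Z_S(U)$: writing $u = \exp X$ with $X$ the unique nilpotent generator of $\mathrm{Lie}(U)$, an element $x$ centralizing $u$ satisfies $\Ad(x)X = X$, hence centralizes all of $\exp(\R X) = U$. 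Therefore $v_s \in Z_S(U)$, and since $v_s$ has order dividing $k$, the hypothesis on $Z_S(U)$ forces $v_s = e$. Consequently $v = v_u$ is unipotent.

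It remains to apply Lemma~\ref{AL1}: since $v$ commutes with $v^k = u$, we have $v \in Z_S(u) = Z_S(U)$ by the same identification, and $v^k = u$ is a nontrivial element of $U$, so the lemma yields $v \in U$. This gives $g \in H$ with $g^k = h$, proving the proposition. The main subtle point I anticipate is justifying Jordan decomposition in the semisimple Lie group $S$ (which could have infinite center or be non-linear) and the identification $Z_S(u) = Z_S(U)$; these are standard facts but may require passing through the adjoint representation to be fully rigorous. The rest of the argument is a direct chaining of the hypotheses through Lemma~\ref{AL1}.
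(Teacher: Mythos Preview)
Your proof is correct, and your final paragraph is in fact exactly the paper's argument: from $v^k=u$ one has $v\in Z_S(u)=Z_S(U)$, and Lemma~\ref{AL1} immediately gives $v\in U$, hence $g\in H$.

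The Jordan decomposition detour in your middle paragraph is entirely superfluous. Lemma~\ref{AL1} has no unipotency hypothesis on $v$; it only requires $v\in Z_S(U)$ and $v^k=u$ with $u$ nontrivial in $U$. So there is no need to first prove $v_s=e$. The paper proceeds directly: $v$ commutes with $u=v^k$; since $U$ is a unipotent one-parameter subgroup and $u\neq e$, centralizing $u$ is the same as centralizing $U$ (the fact you also invoke, via $\exp$ being injective on nilpotents); hence $v\in Z_S(U)$ and the lemma applies. Your Jordan step does yield the stronger conclusion that $v$ is unipotent, but this is never used---and, as you note, it introduces the only genuinely delicate point (Jordan decomposition in a possibly non-linear semisimple $S$), which the paper's route avoids altogether. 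Dropping that paragraph leaves you with the paper's proof verbatim.
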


 \begin{proof}
  As $P_k$ is surjective there exists $g\in G$ such that $g^k=h.$ Let $v=gR\in S$ and $u=hR \in U$.  Since $v$ commutes with $u$ and $U$ is a unipotent one-parameter subgroup it follows that $v$ commutes with all elements of $U$, viz. $v\in Z_S(U)$. As $h\notin R$, $u$ is a nontrivial element of $U$.  Also, since 
  $g^k=h$ we have $v^k=u$, and hence by  Lemma \ref{AL1} we get $v\in U$. Therefore $g\in H$ and in turn 
 $h\in P_k(H).$
 \end{proof}

\bigskip
 \noindent  {\it Proof of Corollary~\ref{A}}:
 Let $x\in R$ be given.   Let $H$ be as above. We note that since $H/R$ is isomorphic to $\R$, to prove that 
 $x\in P_k(R)$ it suffices to prove that $x\in P_k(H)$.  Also, since $N$ is the nilradical of $G$, $G/N$ is 
 reductive, and hence $H/N$ is abelian. 
 Now let $\{u_t\}$ be the one-parameter subgroup of $G$  such that $U=\{u_tR\}$. Let $u=u_1$ and $y=ux\in H.$ Let $A=H/N$ and $a=xN$, $a'=yN\in A.$
  We note that by Proposition \ref{P5}, $yn\in P_k(H)$ for all $n\in N.$   Hence  by Corollary~\ref{C1}  there exists
  $b'\in A$ such that $b'^k=a'$ and if the $a'$-action on  an irreducible subquotient $(M,M')$ of $N$, with respect to the $H$-action (see \S\,4 for definition) is trivial then the $b'$-action is also trivial. Let $b=(u_{\frac1k}N)^{-1}b'\in A$; then we have $b^k=a$.  
  We note that the action of  $U$ on the Lie algebra of $N$ is unipotent and by the
  irreducibility condition this 
  implies that for any $t$ the action of $u_tN$ on $M/M'$ is trivial.  Hence the action of $a$ on $M/M'$ coincides 
  with the action of $a'$, and similarly the action of $b$ coincides with that of $b'$. Thus we see that 
  $b^k=a$ and if  the action of $a$ on an irreducible subquotient $M/M'$ is trivial then so is the action of $b$. 
  Hence by Corollary~\ref{C1} $xn \in P_k(H)$ for all $n$. Thus  $x\in P_k(H)$ and as noted above it follows 
  that $x\in P_k(R)$.   \qed

\medskip
It is noted in \cite{D} that if $S$ is a complex semisimple Lie group of the group of $\R$-points of a quasi-split semisimple algebraic group defined over $\R$ then it contains a unipotent one-parameter subgroup $U$ such that $Z_S(U)$ does not contain any compact subgroup of positive dimension. In this case the set of  primes dividing the orders of elements of $Z_S(U)$ is finite, say $F$, and for $k\in \N$ which is not divisible by any $p$ in $F$ the one-parameter subgroup $U$ satisfies the condition as in the hypothesis of Corollary~\ref{A}.

\vskip8mm

\begin{flushleft}
S.G. Dani and Arunava Mandal\\
Department of Mathematics\\
Indian Institute of Technology Bombay\\
Powai, Mumbai 400076\\
India

\medskip
E-mail: {\tt sdani@math.iitb.ac.in} and {\tt  amandal@math.iitb.ac.in} 
\end{flushleft}

\end{document}